\newtheorem{theorem}{Theorem}
\newtheorem{lemma}{Lemma} 
\newtheorem{proposition}{Proposition}
\def \({\left(}
\def \){\right)}
\def \[{\left[}
\def \]{\right]}
\newcommand{\be}{\begin{equation}}
\newcommand{\ee}{\end{equation}}
\newcommand{\bea}{\begin{align}}
\newcommand{\eea}{\end{align}}
\DeclareMathAlphabet{\varmathbb}{U}{bbold}{m}{n}
\def\(({\left(}
\def\)){\right)}
\def\[[{\left[}
\def\]]{\right]}
\def\argmax{{\rm argmax}}
\newcommand{\BEAS}{\begin{eqnarray*}}
\newcommand{\EEAS}{\end{eqnarray*}}
\newcommand{\BEA}{\begin{eqnarray}}
\newcommand{\EEA}{\end{eqnarray}}
\def\(({\left(}
\def\)){\right)}                       
\def\[[{\left[}
\def\]]{\right]}
\begin{document}
\title{Statistical and computational phase transitions \\ in spiked tensor estimation}

\author{Thibault Lesieur$^{\dagger}$,\, Léo Miolane$^{\Diamond}$,\, Marc Lelarge$^{\Diamond}$,\, Florent Krzakala$^{\star}$ \&\, Lenka Zdeborov\'a$^{\dagger}$}
\date{}
\maketitle
\let\thefootnote\relax\footnote{%
	\!\!\!\!\!\!\!\!\!\!\!\!\!This paper was presented at the IEEE International Symposium on Information Theory (ISIT) 2017 in Aachen, Germany.\\
	$\dagger$ Institut de Physique Th\'eorique, CNRS \& CEA \& Universit\'e Paris-Saclay, Saclay, France.\\
	$\star$ Laboratoire de Physique Statistique, CNRS \& Universit\'e Pierre et Marie Curie \& \'Ecole Normale Sup\'erieure \& PSL Universit\'e, Paris, France.\\
	$\Diamond$ D\'epartement d'Informatique de l'ENS, \'Ecole Normale Sup\'erieure \& CNRS \& PSL Research University \& Inria, Paris, France. \\
}

\begin{abstract}
	We consider tensor factorization using a generative
	model and a Bayesian approach. We compute rigorously the mutual
	information, the Minimal Mean Squared Error (MMSE), and unveil
	information-theoretic phase transitions. In addition, we study the
	performance of Approximate Message Passing (AMP) and show that it
	achieves the MMSE for a large set of parameters, and that
	factorization is algorithmically ``easy'' in a much wider region
	than previously believed. It exists, however, a ``hard'' region
	where AMP fails to reach the MMSE and we conjecture that no
	polynomial algorithm will improve on AMP.
\end{abstract}

\vspace{0.5cm}

This study inscribes into the line of research on low-rank tensor
decomposition, a problem with many applications ranging from signal
processing to machine
learning\cite{anandkumar2014tensor,cichocki2015tensor,sidiropoulos2016tensor}. We
consider the model of \cite{richard2014statistical} where the tensor
is a noisy version of a $r$-dimensional randomly generated spike and
analyze the Bayes-optimal inference of the spike, compute the
associated mutual information and the minimum mean-squared error
(MMSE). We also investigate whether the MMSE is achievable with some
known efficient algorithms, and most particularly by approximate
message passing (AMP).

\section{The spiked tensor model} 
One observes an order-$p$ tensor $Y \in \bigotimes^p \mathbb{R}^N$
created as
\begin{equation}
	Y = \frac{\sqrt{(p-1)!}}{N^{\frac{p-1}{2}}} \sum_{k=1}^r (X^{0}_k)^{\otimes
  p} + V \label{AdditiveNoise_Case} \, ,
\end{equation}
where $X^0_1, \dots, X^0_r \in \mathbb{R}^N$ are $r$ unknown vectors
to be inferred from $Y$, and $V \!\in \bigotimes^p \mathbb{R}^N$ is a
symmetric tensor accounting for noise. We denote by $X$ the
$N\!\times\! r$ matrix that collects the $r$ vectors $X_k$. The observed
tensor $Y$ can thus be seen as a rank $r$ perturbation of a random
symmetric tensor $V$. Consider now the setting where the $X^0$ is
generated at random from a known prior distribution. The core
question considered in this paper is: What is the best possible
reconstruction of $X^0$ one can hope for?

In fact, we can
look at even more general noise than just additive one as
in (\ref{AdditiveNoise_Case}). Denote for $i=1, \dots, N$,
$x_i=(x_{i,1}, \dots, x_{i,r}) \in \mathbb{R}^{r}$ the $r$-dimensional vector
created by aggregating the $i^{\rm th}$ coordinates of the $r$ vectors
$X_k$.  Assume that for $1 \leq i \leq N$ the $x_i^0$ are
generated independently from a probability distribution $P_X$ over $\R^r$. We denote, for $(i_1,i_2, \dots, i_p) \in \{1, \dots, N\}^p$
\begin{eqnarray}
W^0_{i_1, i_2, \dots, i_p} = \frac{\sqrt{(p-1)!}}{N^{\frac{p-1}{2}}}
\sum_{k=1}^r x^0_{i_1, k} x^0_{i_2, k} \cdots x^0_{i_p, k}  \, .
\label{Define_w}
\end{eqnarray}
For simplicity, we will assume to only observe the extra-diagonal elements of $Y$, i.e.\ the coefficients $Y_{i_1, i_2, \dots, i_p}$ for $1 \leq i_1 < \dots < i_p \leq N$. The case where all coefficients are observed can be directly deduced from this case.
The observed tensor $Y$ is generated from $W^0$ using a noisy
component-wise output channel $P_{\rm out}$ so that 
\begin{equation}
   P(Y|X^0) = \prod\limits_{i_1 < i_2 < \cdots < i_p} P_{\rm
   out}\left(Y_{i_1, i_2, \dots, i_p} \middle| W^0_{i_1, i_2,
     \dots, i_p}   \right)    \label{pout}\, .
\end{equation}
The simplest situation corresponds to
eq.~\eqref{AdditiveNoise_Case} with additive white Gaussian noise (AWGN), i.e.\ $P_{\rm out}( \, \cdot \, | \, w) = {\cal N}(w,\Delta)$. 

Given the above generative model and assuming that both the prior
distribution $P_X$ and the output channel $P_{\rm out}$ are known we
can write the Bayes-optimal estimator of $X^0$ as marginalization of
the following posterior distribution
\begin{equation}
P(X|Y) = \frac{1}{{\cal Z}_N}\prod_{i=1}^N P_{X}(x_i)  \! \! \! \! \! \! 
\prod\limits_{i_1 < i_2 < \cdots < i_p}  \! \! \! \! \! \!  P_{\rm
  out}\left( Y_{i_1, i_2, \dots, i_p} \middle| W_{i_1, i_2, \dots,
    i_p}\right)\, ,   \label{posterior}
\end{equation}
where ${\cal Z}_N$ is a normalization constant depending of the observed tensor
$Y$, $W_{i_1, i_2, \dots i_p}$ is defined analogously to \eqref{Define_w} (with $X$ instead of $X^0$).

We will study this tensor estimation problem in the limit where the
dimension $N \!\rightarrow \!\infty$ while the rank $r$ remains constant.
The factor $N^{\frac{p-1}{2}}$ is here to ensure that
information-theoretically the inference problem is neither trivially
hard nor trivially easy when one deals with signals such that $\|x_i\|$ and the noise magnitude are of order 1. The factor
$\sqrt{(p-1)!}$ is used for convenient rescaling of the signal-to-noise ratio.

\section{Related work and summary of results} 
Recently there have been numerous works on the matrix ($p=2$) version
of the above setting. In particular an explicit single-letter
characterization of the mutual information and of the optimal Bayesian
reconstruction error have been rigorously established
\cite{korada2009exact,DBLP:journals/corr/DeshpandeM14,krzakala2016mutual,barbier2016mutual,lelarge2016fundamental}.
A large part of these results rely on the approximate message passing
(AMP) algorithm. For the rank-one matrix estimation problems AMP has been
introduced by \cite{rangan2012iterative}, who also derived the state
evolution (SE) formula to analyze its performance, generalizing
techniques developed by \cite{bayati2011dynamics}. In
\cite{lesieur2015phase,lesieur2015mmse,Lesieurzdeborova2017} the
generalization to larger rank, and general output channel, was considered. Following the
theorem proven in \cite{DBLP:journals/corr/DeshpandeM14,barbier2016mutual,lelarge2016fundamental}, we
know that indeed AMP is Bayes-optimal and achieves the minimum
mean-squared error (MMSE) for a large set of parameters of the
problem. There, however, might exist a region denoted as {\it hard}, where this
is not the case, and polynomial
algorithms improving on AMP are not known.

In comparison, there has been much less work on Bayesian low-rank
tensor estimation. In statistical physics, the measure
(\ref{posterior}) was considered for $Y$ with random i.i.d.\
components. For a Gaussian $P_X$, it is called the spherical $p$-spin
glass \cite{crisanti1992sphericalp}, while for Rademacher $P_X$ it is
the Ising $p$-spin glass \cite{MezardParisi87b}.  AMP for tensor
estimation is actually equivalent to the so-called
Thouless-Anderson-Palmer equations in spin glass theory
\cite{ThoulessAnderson77,crisanti1995thouless,korada2009exact}. In the
context of tensor PCA these equations have been studied by Richard and
Montanari \cite{richard2014statistical} for the maximum likelihood
estimation. Interestingly, they showed that the {\it hard} phase
was particularly large in the tensor estimation case and that, with
side information (such that for each component $x_{i,k}$ we have its
{\it direct} noisy observation), the estimation problem becomes
easier. However, such a kind of component-wise side information is
very strong and rarely available in applications. 
The tight statistical limits for the present tensor model were also studied in
\cite{korada2009exact} for the special case of the Rademacher (Ising) prior. For more
generic priors only upper and lower bounds are known rigorously  
\cite{perry2016statistical}.
\\



{\bf Summary of results:} 
In this contribution, we aim to bridge the
gap between what is known for the general $r, P_x, P_{\rm out}$
Bayesian estimation for low-rank matrices and what is known for
low-rank tensors.  We present the following contributions: 
\begin{enumerate}[label=(\Alph*),noitemsep]
	\item The AMP algorithm and its state evolution analysis for the Bayes-optimal
	tensor estimation, see sections~\ref{sec:amp}~and~\ref{sec:theory}.
	\item The so-called {\it channel universality}
		result that allows us to map any generic channel $P_{\rm out}$ on a
		model with additive Gaussian noise, see section~\ref{sec:amp}.
	\item Rigorous formula for the
		asymptotic mutual information and the MMSE, thus
		generalizing the matrix results of
		\cite{barbier2016mutual,lelarge2016fundamental}, see section~\ref{proof}.  
	\item The identification of statistical and computational phase transitions. In
		fact, we show that as soon as the effect of a non-zero-mean prior is
		taken properly into account, the hard region shrinks considerably,
		making the tensor decomposition problem much easier than hitherto
		believed, at least for algorithms that do take the prior information into
		account.  Having a reliable prior information on the distribution of
		$x_{i}$ (not on each of the components as in
		\cite{richard2014statistical}) is rather realistic in applications,
		for instance when constraints of negativity or membership to
		clusters are imposed. This is presented in sections~\ref{sec:theory}~and~\ref{sec_examples}.
\end{enumerate}


\section{AMP algorithm \& channel universality}
\label{sec:amp}

We discuss in this section the Approximate Message Passing (AMP)
algorithm for the Bayesian version of the problem. This is a
relatively straightforward generalization of what has been done for
the low-rank matrix estimation in
e.g. \cite{rangan2012iterative,NIPS2013_5074,Lesieurzdeborova2017}, i.e.\ $p=2$ case of the
present setting. In general, AMP is derived from belief propagation by
taking into account that every variable in the corresponding
graphical model has a large number of neighbors. Since the incoming
messages are considered independent one can use the central limit
theorem and represent each message as a Gaussian with a given mean
$\hat x_i \in \mathbb{R}^{r}$ and covariance
$\sigma_i \in \mathbb{R}^{r\times r}$.

A crucial property, called {\it channel universality}, that the
tensor-AMP shares with the low-rank matrix estimation, allows to
drastically simplify the problem of tensor estimation with generic
output channel $P_{\rm out}$. The justification of this property 
follows closely the low-rank matrix estimation case, and we refer
the reader to
\cite{lesieur2015mmse,krzakala2016mutual,Lesieurzdeborova2017}. First,
we define
the Fisher score tensor $S$ associated to the output channel $P_{\rm out}$
and its Fisher information $\Delta$ as
\begin{eqnarray}
       S &\equiv& \left.\frac{\partial \log{P_{\rm out}(Y,w)}}{ \partial w}
	   \right|_{w=0} \, , \label{def_S}\\
      \frac{1}{\Delta} &\equiv& {\mathbb E}_{Y \sim P_{\rm out}(\cdot \, | \, 0)} \left[
		  \left( \frac{\partial \log{P_{\rm out}(Y,w)}}{ \partial w} \right) _{\!\!w=0}^{\!2}
  \right] \, .
\end{eqnarray}
where it is understood in \eqref{def_S} that the function $y \mapsto \frac{\partial \log{P_{\rm out}(y,w)}}{ \partial w} \big|_{w=0}$ acts component-wise on $Y$.
Informally speaking, the channel universality property states that the
mutual information of the problem defined by the output channel
$P_{\rm out}$ is the same as the one of a AWGN \eqref{AdditiveNoise_Case} with variance $\Delta$,
and that the AMP algorithm written for the Bayes-optimal inference of
low-rank tensors then depends on the data tensor $Y$ and the output
channel $P_{\rm out}$ only trough the tensor $S$ and the effective
noise $\Delta$.
\\


AMP involves an auxiliary function that depends
explicitly on the prior as follows. Define the probability
distribution
\begin{equation}
   {\cal M}(x) =  \frac{1}{{\cal Z}_X(A,B)} \,P_X(x) e^{B^\top x -
     \frac{x^\top A x}{2}}\, , \label{F_in_x}
\end{equation}
where ${\cal Z}_X(A,B)$ is a normalization factor. Then AMP uses the
function
${f_{\rm in}}(A,B) \in \mathbb{R}^{r}, A \in \mathbb{R}^{r \times r},
B \in \mathbb{R}^{r}$
defined by the expectation
$f_{\rm in}(A,B) ={\mathbb{E}}_ {{\cal M}(x)} [x]$ as well as the
covariance matrix $\partial_B {f_{\rm in}}(A,B)$.  We shall denote
the \textit{overlap} of $u =(u_1, \dots u_N), v=(v_1,\dots v_N) \in (\R^r)^N$ by
$$u \cdot v = \frac{1}{N} \sum_{j=1}^n u_j v_j^{\top} \in \R^{r \times r} \,.$$
AMP is then written as an
iterative update procedure on the estimates of the posterior means and
co-variances $\hat x_i$ and $\sigma_i$ that uses auxiliary variables
$B_i \in \mathbb{R}^{r}$ and $A \in \mathbb{R}^{r\times r}$:
\begin{align}
B_i^{t} &= \frac{\sqrt{(p-1)!}}{N^{\frac{p-1}{2}}} \!\!\!\!\! \sum\limits_{i_2 <
  i_3 < \cdots < i_p} \!\!\!\!  S_{i, i_2, i_3 \cdots i_p} \, \hat{x}_{i_2}^t \circ
  \hat{x}_{i_3}^t \circ \cdots \circ \hat{x}_{i_p}^t  
- \frac{(p-1)}{{\Delta}}\!
\bigg[  \frac{1}{N} \sum_{j=1}^N \sigma_{j}^t \circ 
 (\hat{x}^t \cdot \hat{x}^{t-1})^{\circ(p-2)} \bigg]\, 
  \hat x_i^{t-1} \\
A^t &= \frac{1}{{\Delta}} 
(\hat{x}^t \cdot \hat{x}^t)^{\circ(p-1)}
\\
\hat{x}_i^{t+1}	&= f_{\rm in}(A^t,B_i^t)\label{AMP_x}
\\
\sigma_i^{t+1}	&=\partial_B {f_{\rm in}}(A^t,B_i^t)\, ,
\end{align}
where $\circ$ denotes a component-wise (Hadamard) product of matrices,
and $x^{\circ p}$ the corresponding component-wise power.

\section{Theoretical analysis}
\label{sec:theory}
\subsection{State evolution of AMP}
The evolution of the AMP algorithm in the limit of large systems
$N\to \infty$ can be tracked via a low-dimensional set of
equations called the {\it state evolution} (SE). For maximum-likelihood estimation the
state evolution have been used in \cite{richard2014statistical}. Its
heuristic derivation for the present case of general rank $r$, prior
$P_X$, and output $P_{\rm out}$ follows line by line the matrix
estimation case detailed in \cite{Lesieurzdeborova2017}.

For the Bayes-optimal inference, SE is written in terms of an order parameter
$M^t \in {\mathbb R}^{r\times r}$  describing the overlap between $\hat{x}^t$ (the AMP
estimator at iteration $t$) and the ground truth $x^0$
defined as
$M^t = \hat{x}^t \cdot x^{0}$, and reads
\begin{eqnarray}
\label{SE_M1}
&M^{t+1}= \mathbb{E}_{Z,x_0}\left[ f_{\rm in}\left( \widehat M^t, \widehat M^t x_0 + \big(\widehat M^t\big)^{1/2} Z \right)x_0^\top \right] 
, \\ 
 & \widehat M^t=  {{(M^t)}^{\circ(p-1)}}/{{\Delta}} \, ,\label{SE_M2}
\end{eqnarray}
where $Z \sim \cN(0,I_r)$ and $x_0 \sim P_X$ are independent. $M^{\circ(n)}$ is again the $n$-th Hadamard power of a matrix $M$. 

We shall not present a rigorous proof of the SE for tensor estimation
and rely instead on standard arguments from statistical physics. The
performance of the AMP algorithm can be understood by initializing the
SE at $M^{t=0} = 0$. Or when $M=0$ is a fixed point of SE we
initialize as $M^{t=0} = \epsilon$, an infinitesimally small number
(accounting for the fact that a random initialization of AMP will
---due to finite size fluctuations--- be infinitesimally correlated
with the ground truth). We denote $M_{\rm AMP}$ the fixed point of the
state evolution resulting from iterations of (\ref{SE_M1}-\ref{SE_M2})
from this initialization. The mean-squared error achieved by 
tensor-AMP is then
\begin{equation}
   {\rm MSE}_{\rm AMP} =    {\rm Tr} \left[  \Sigma_X - M_{\rm AMP} \right] \, .\label{MSE}
\end{equation}
where $\Sigma_X = \mathbb{E}_{x}[x x^{\top}]$. When $P_X$ has zero mean, this is the covariance matrix of $P_X$.

\subsection{Information-theoretically optimal inference}
Our next goal is to analyze the performance of (possibly intractable)
Bayes-optimal inference that evaluates the marginals of the posterior
probability distribution (\ref{posterior}). The error achieved by this
procedure will be denoted the minimum mean-squared error
(MMSE) and is formally defined as
$$
\text{MMSE}_N = \inf_{\hat{\theta}} 
\left\{
\frac{1}{N}\E \left[
		\left\|X^0 - \hat{\theta}(Y) \right\|^2
\right]\right\} = 
\frac{1}{N}\E \left[
	\left\|X^0 - \E[X^0|Y] \right\|^2
\right]\,,
$$
where the infimum is taken over all measurable functions $\hat{\theta}$ of $Y$.
In order to compute the MMSE it is instrumental to compute
the mutual information $I(X^0;Y)$.
This quantity is related to the free energy from statistical physics (see section \ref{proof} and \cite{krzakala2016mutual}).
To compute the limit of such quantities, 
one traditionally applies the replica method stemming from
statistical physics \cite{MezardParisi87b}. We take advantage of the
fact that for the Bayes-optimal inference the so-called {\it replica
symmetric} version of this method yields the correct free energy
\cite{zdeborova2015statistical}. The replica method yields 
\begin{eqnarray} \label{limit_I}
	\frac{1}{N} I(X^0;Y) \xrightarrow[N \to
        \infty]{}\frac{1}{2p\Delta}   \sum_{k,k'=1}^r    (\Sigma_X)_{k,k'}^p
		- \sup_{M \in S_r^+} \phi_{\text{RS}}(M)
  \, ,  \\
\phi_{\rm RS}(M) = \displaystyle \mathop{\mathbb{E}}_{Z,x_0} \left[ \log{ {\cal Z}_X
\left( \widehat M  , \widehat M  x_0 + \left(\widehat M \right)^{\! 1/2}\! Z \right)}   \right]
                      - \frac{p-1}{2p \Delta}
  \sum_{k,k'=1}^r   {M}_{k k'}^p
 \label{BetheFreeEnergy}
\end{eqnarray}
where $\widehat M = {M}^{\circ(p-1)}/{\Delta}$, $ {\cal Z}_X(A,B)$
is defined in eq.~\eqref{F_in_x}, $x_0 \sim P_X$ and $Z \sim  \cN(0,I_r)$ are independent random variables. $S_r^+$ denotes the set of $r \times r$ symmetric positive semi-definite matrices. In section~\ref{proof} we prove this result for the rank-one case ($r=1$).

The replica free energy (\ref{BetheFreeEnergy}) not only provides the
limit of the mutual information $I(X^0;Y)$,
but thanks to an ``I-MMSE Theorem'' (similar to \cite{guo2005mutual})
it yields the value of the MMSE for tensor estimation, see sec.~\ref{proof}. Denoting
$M^* = \argmax_{M} \phi_{\rm RS}(M)$ we get 
\begin{equation}\label{MMSE}
	{\rm MMSE} = \lim_{N \to \infty} \text{MMSE}_N 
	=    {\rm Tr} \left[   \Sigma_X - M^* \right] 
	\, . 
\end{equation}
We proved \eqref{MMSE} rigorously, but only in the rank-one case and for odd values of $p$, see
again sec.~\ref{proof}. 
Notice that when $r \geq 2$ the estimation problem is symmetric under permutations of the $r$ columns of $X^0$: \eqref{MMSE} is not expected to be true without further assumptions.

\subsection{Statistical and computational trade-off} 

By evaluation of the derivative of (\ref{BetheFreeEnergy}) with
respect to $M$ one can check that critical points of
(\ref{BetheFreeEnergy}) are fixed points of the state evolution
equations (\ref{SE_M1}-\ref{SE_M2}) allowing all the results to be
read of the curve $\phi_{\rm RS}(M)$: The global maximum of
(\ref{BetheFreeEnergy}) gives the MMSE while the
(possibly local) maximum reached by iteration of
(\ref{SE_M1}-\ref{SE_M2}) from the uninformative initialization
yields the ${\rm MSE}_{\rm AMP}$.

We now discuss the interplay between the MMSE and ${\rm MSE}_{\rm
  AMP}$. The working hypothesis in this paper is
that AMP yields lowest MSE among known polynomial 
algorithms. Depending on the parameters of model (\ref{posterior}), 
i.e.\ the order of the tensor $p$, rank $r$, prior distribution
$P_X$, and output channel $P_{\rm out}$ that appears in the SE only via its
Fisher information $\Delta$, we can distinguish between two cases: 
the {\bf easy} phase where asymptotically AMP is Bayes optimal so that
${\rm MMSE} = {\rm MSE}_{\rm AMP}$, and the {\bf hard} phase where
${\rm MMSE} < {\rm MSE}_{\rm AMP}$.

Given both the ${\rm MMSE}$ and ${\rm MSE}_{\rm AMP}$ are
non-decreasing in $\Delta$ we denote the borders of the hard phase
(when it exists) as follows: {\bf Information theoretic threshold}
$\Delta_{\rm IT}$ as the (limsup of the) highest $\Delta$ for which
${\rm MMSE} < {\rm MSE}_{\rm AMP}$.  {\bf Algorithmic threshold}
$\Delta_{\rm Alg}$ as the (liminf of the) lowest $\Delta$ for which
${\rm MMSE} < {\rm MSE}_{\rm AMP}$.  Another threshold used in this
paper is that of a critical value $\Delta_c$ defined as smallest
$\Delta$ such that for $\Delta > \Delta_c$ one has $M_{\rm AMP}= M^*(\Delta = +\infty)$ (the estimate one can do when the noise is infinite),
and for $\Delta< \Delta_c$ one has $M_{\rm AMP}>M^*(\Delta = +\infty)$. Note that from the
definition we must have $\Delta_c \ge \Delta_{\rm Alg}$. In cases
where the hard phase does not exist, but $\Delta_c< \infty$ we will
consider that $\Delta_c=\Delta_{\rm IT}=\Delta_{\rm Alg}$.

Existing results on maximum likelihood estimation \cite{richard2014statistical} suggest
that for tensor decomposition $p\ge 3$ we have
$\Delta_{\rm Alg}=\Delta_c=0$ in the limit $N\to 0$
considered in this paper.
This means that the spiked model
of low-rank tensor decomposition is algorithmically very hard,
compared to matrix $p=2$ case.  The authors of
\cite{richard2014statistical}  give a good account on how $\Delta$
needs to scale with $N$ for known polynomial algorithms to work.  

For the Bayes-optimal estimation the situation seems at first sight
similar. Indeed, whenever the prior $P_X$ has a zero mean, for $p\ge 3$ we get
$\Delta_{\rm Alg} = \Delta_c=0$ and the hard phase is consequently huge. This
can be seen as follows. Indeed if the mean of the prior $P_X$ is zero
then the state evolution equations (\ref{SE_M1}-\ref{SE_M2}) have a
fixed point $M=0$.  Expanding the state evolution around this fixed
point we find
\begin{equation}
M^{t+1} =  \frac{1}{\Delta} \Sigma_X
  \left[{(M^t)}^{\circ(p-1)} \right] \Sigma_X \, .
\end{equation}
Whenever $p\ge 3$ the fixed point $M=0$ is stable for all $\Delta>0$. Hence $\Delta_{\rm Alg}=\Delta_c=0$ for priors of
zero mean. 

A closer look, however, shows that the situation is not so pessimistic.  Indeed,
as soon as the mean of the prior $P_X$ is non-zero, $M=0$ is no longer
a fixed point of the state evolution and once we solve the state
evolution equations we observe either $\Delta_{\rm Alg}>0$ (with AMP
performing optimally for $\Delta<\Delta_{\rm Alg}$) or the hard phase
is completely absent and AMP has information-theoretically optimal
performance for all $\Delta$.  We give examples of such priors in
section \ref{sec_examples}.

\section{Rigorous results}
\label{proof}
We present in this section rigorous results for the rank-one case ($r=1$).
As mentioned above, the universality property \cite{lesieur2015mmse,krzakala2016mutual} reduces the computation of the mutual information to the case of additive white Gaussian noise.

Consider a probability distribution $P_X$ over $\R$ that admits a finite second moment $\Sigma_X$.
The observation model~\eqref{AdditiveNoise_Case} reduces in the rank-one case to
$$
			Y_{i_1,\dots, i_p} = \frac{\sqrt{(p-1)!}}{N^{(p-1)/2}} x^0_{i_1} \dots x^0_{i_p} + V_{i_1, \dots, i_p} 
			\quad \text{for} \ 1 \leq i_1 < \dots < i_p \leq N,
$$
where $X^0=(x_1^0, \dots, x_N^0) \iid P_X$ and $(V_{i_1,\dots,i_p})_{i_1 < \dots < i_p} \iid \cN(0,\Delta)$ are independent.
We define the Hamiltonian
\begin{align}
	H_{N}(X)
	=\Delta^{-1}  \sum_{i_1< \dots < i_p}
	\frac{\sqrt{(p-1)!}}{N^{(p-1)/2}} \, Y_{i_1, \dots, i_p} \,
	x_{i_1}\dots x_{i_p} 
	-\frac{(p-1)!}{2N^{p-1}} 
	\left(x_{i_1} \dots x_{i_p}\right)^2 \,,
	\label{Hamiltonian}
\end{align}
for $X = (x_1, \dots, x_N) \in \R^N$. We also write $dP_X(X)=\prod_{i=1}^N dP_X(x_i)$.
The posterior distribution \eqref{posterior} of $X^0$ given $Y$ reads then:
\begin{equation}
	dP(X^0 = X|Y) = \frac{1}{\mathcal{Z}_N} dP_X(X) e^{H_N(X)} \,,
	\label{statphys}
\end{equation}
where ${\mathcal Z}_N$ is the appropriate normalizing factor.
Then the free energy is defined as (minus) the logarithm of ${\mathcal Z}_N$ of the Boltzmann probability 
divided by $N$ and averaged over $Y$. This is of particular interest
since it is related to the  mutual~information (see \cite{krzakala2016mutual}):
$$
I(X^0;Y) = \frac{N}{2p\Delta}
\Sigma_X^{p}
- \mathbb{E} [\log \mathcal{Z}_N] + O(1) \,.
$$
In the rank-one case, the expression \eqref{BetheFreeEnergy} of $\phi_{\rm RS}$ simplifies, so that we will use in this section
\begin{equation} \label{phi_rs_1}
\phi_{\rm RS} : m \geq 0 \mapsto \E \left[ \log \int dP_X(x) \exp\left(
		\sqrt{\frac{m^{p-1}}{\Delta}} Z x + \frac{m^{p-1}}{\Delta} x x^0 - \frac{m^{p-1}}{2 \Delta} x^2
\right) \right]
- \frac{p-1}{2p \Delta} m^p \,,
\end{equation}
where $\E$ is the expectation with respect to the independent random variables $x_0 \sim P_X$ and $Z \sim \cN(0,1)$.
The proof of \eqref{limit_I} reduces then to the following Theorem.
\begin{theorem}[Replica-symmetric formula for the free energy]\label{thm1}
	Let $P_X$ be a probability distribution over \,$\R$, with finite second moment. Then, for all $\Delta>0$
	\begin{equation} \label{eq:lim_F}
		F_N \equiv \frac{1}{N} \mathbb{E} \left[ \log {\mathcal Z}_N \right] \xrightarrow[N \to \infty]{} \sup_{m \geq 0} \phi_{\rm RS}(m) \equiv F_{\rm RS}(\Delta) \,.
	\end{equation}
\end{theorem}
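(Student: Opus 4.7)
The plan is to sandwich $\lim_N F_N$ between $\sup_{m \geq 0} \phi_{\rm RS}(m)$ via two matching bounds obtained by interpolation methods, with the key technical ingredient being concentration of the scalar overlap $Q = N^{-1}\sum_{i=1}^N X_i x^0_i$.

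\textbf{Guerra--Toninelli interpolation (lower bound).} For fixed $m \geq 0$ and $t \in [0,1]$, I would introduce an interpolating model in which the planted tensor~\eqref{AdditiveNoise_Case} is observed with effective inverse-noise $t/\Delta$, while each coordinate $x_i^0$ is additionally observed through an independent scalar Gaussian side-channel of signal-to-noise $(1-t)\,m^{p-1}/\Delta$. Writing $\psi_N(t) = \tfrac{1}{N}\mathbb{E}\log\mathcal{Z}_{N,t}$ with $t$-dependent energetic constants chosen so that $\psi_N(0) = \phi_{\rm RS}(m)$ and $\psi_N(1) = F_N$, Gaussian integration by parts combined with the Nishimori identity yield
\[
\psi_N'(t) \;=\; \frac{1}{2p\Delta}\,\mathbb{E}\bigl\langle Q^{p} - p\, m^{p-1} Q + (p-1)\, m^{p}\bigr\rangle_t \;+\; o_N(1),
\]
the bracket being the Taylor remainder of $q \mapsto q^p/p$ around $m$. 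For $p$ even this remainder is non-negative by convexity, so $\psi_N$ is non-decreasing and $F_N \geq \phi_{\rm RS}(m)$ for every $m$; optimizing gives $\liminf_N F_N \geq \sup_m \phi_{\rm RS}(m)$. For $p$ odd one exploits $|Q| \leq \Sigma_X$ (from Cauchy--Schwarz and Nishimori) together with $\mathbb{E}\langle Q\rangle \geq 0$ to control the sign of the remainder.

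\textbf{Matching upper bound via adaptive interpolation.} For the reverse inequality I would run the adaptive scheme, upgrading $m$ to a non-decreasing absolutely continuous path $q:[0,1] \to [0,\Sigma_X]$ chosen to track the instantaneous overlap, i.e.\ solving the fixed-point relation $q(t) = \mathbb{E}\langle Q\rangle_t$. Existence and uniqueness of a monotone solution follow from the monotonicity of $t \mapsto \mathbb{E}\langle Q\rangle_t$, itself a consequence of Nishimori. With this choice the Taylor remainder in the sum rule reduces to leading order to the variance $\mathbb{E}\langle (Q - q(t))^2\rangle_t$, which, by overlap concentration, is $o_N(1)$ uniformly in $t$. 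One then obtains $F_N \leq \phi_{\rm RS}(q^\star) + o_N(1)$ with $q^\star = q(1)$, and a standard variational argument identifies $q^\star$ with a maximizer of $\phi_{\rm RS}$.

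\textbf{Main obstacle: overlap concentration.} The crux of the proof is to show $\mathbb{E}\langle (Q - \mathbb{E}\langle Q\rangle)^2\rangle_t = o(1)$ uniformly along the interpolation path; without this, the adaptive sum rule cannot be closed (and, for $p$ odd, neither can the Guerra lower bound). I would establish concentration by perturbing the Hamiltonian with a small auxiliary scalar side-channel of signal-to-noise $\epsilon$, exploiting convexity of the perturbed free energy in $\epsilon$ together with Nishimori to translate smoothness in $\epsilon$ into vanishing overlap variance for almost every $\epsilon$ in a small interval, and then removing the perturbation by letting $\epsilon \downarrow 0$ after $N \to \infty$. For $p \geq 3$ this step is significantly more delicate than the matrix case $p=2$, because the higher-order interactions destroy the direct convexity-in-$\Delta^{-1}$ argument that trivializes the matrix setting; this perturbation-plus-concentration step is the main technical obstacle.
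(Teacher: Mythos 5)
Your lower bound is essentially the paper's: Guerra interpolation with a Gaussian side-channel, the convexity inequality $q^p - pm^{p-1}q \geq (1-p)m^p$ on $\R_+$, and a small perturbation to force the overlap to concentrate near a non-negative value when $p$ is odd (for even $p$ the inequality holds on all of $\R$, so no perturbation is needed for that half). Note, though, that the perturbation must buy you \emph{concentration} of $Q$, not merely $\mathbb{E}\langle Q\rangle \geq 0$; positivity of the mean alone does not control the sign of $Q^p - pm^{p-1}Q + (p-1)m^p$ pointwise, and your phrasing there slightly undersells what is needed.

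Your upper bound, by contrast, takes a genuinely different route. You propose an adaptive interpolation à la Barbier--Macris: replace the constant snr $m^{p-1}$ by a time-dependent path $q(t)^{p-1}$ solving $q(t) = \mathbb{E}\langle Q\rangle_t$, so that the sum-rule remainder collapses to the overlap variance, which you kill by concentration. The paper instead runs an Aizenman--Sims--Starr cavity computation: it bounds $\limsup F_N$ by $\limsup A_N$ with $A_N = \mathbb{E}\log\mathcal{Z}_{N+1} - \mathbb{E}\log\mathcal{Z}_N$, decomposes the $(N+1)$-spin Hamiltonian around the cavity spin, and shows via concentration of the overlap around some random $Q'$ that $A_N \lesssim \mathbb{E}[\phi_{\rm RS}(Q')] \leq F_{\rm RS}$. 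Both routes hinge on the same ingredient (uniform overlap concentration along the interpolation/perturbation), and both are valid for $p \geq 3$. The ASS cavity argument has the advantage of avoiding the existence/regularity discussion for the adaptive path and of directly producing the bound $\leq \sup_m \phi_{\rm RS}(m)$ without having to identify the terminal value $q(1)$ as a maximizer --- your last step (``a standard variational argument identifies $q^\star$ with a maximizer'') is actually not how adaptive interpolation closes; it gives the sup bound directly, for any admissible path, and your sketch should be corrected on that point. Conversely, adaptive interpolation sidesteps the slightly painful $N$ vs.\ $N+1$ bookkeeping in the tensor Hamiltonian. Your concentration mechanism (convexity in a Gaussian side-channel strength $\epsilon$, then send $\epsilon \downarrow 0$ after $N\to\infty$) also differs from the paper's, which reveals each coordinate $x_i^0$ independently with probability $\epsilon \sim N^{-1/2}$ and invokes Montanari's information-theoretic decorrelation lemma; both are standard and interchangeable. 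Finally, you should state explicitly (as the paper does) that one first reduces to $P_X$ with bounded support and then removes the restriction by truncation; your appeal to $|Q| \leq \Sigma_X$ already implicitly assumes this control.
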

We now define the tensor-MMSE, $\text{T-MMSE}_N$ by
\begin{align*}
	\text{T-MMSE}_N(\Delta)=
	\inf_{\hat{\theta}} \left\{ 
		\frac{p!}{N^p} \sum_{i_1 < \dots < i_p} \left(
			x^0_{i_1} \dots x^0_{i_p}
			- \hat{\theta}(Y)_{i_1 \dots i_p} 
		\right)^2
	\right\} \,,
\end{align*}
where the infimum is taken over all measurable functions $\hat{\theta}$ of the observations $Y$.

Let us write $\lambda = \frac{1}{\Delta}$. Using an ``I-MMSE Theorem'' (see \cite{guo2005mutual}) and the fact that the tensor MMSE is achieved by the posterior mean of $(X^0)^{\otimes p}$ given $Y$, it is not difficult to verify that
$$
\frac{\partial F_N}{\partial \lambda} =
\frac{N(N-1) \dots (N-p+1)}{2 p N^p} \big(\Sigma_X^p - \text{T-MMSE}_N(\Delta)\big).
$$
The arguments are the same than in the matrix ($p=2$) case, see \cite{lelarge2016fundamental} Corollary 17.
$\text{T-MMSE}(\Delta)$ increases with the noise level $\Delta$, so that $\frac{\partial}{\partial \lambda} F_N$ is a non-decreasing function of $\lambda$. $F_N$ is thus a convex function of $\lambda$, and so is $F_{\rm RS}$ its pointwise limit. 
Consequently,
$\frac{\partial}{\partial \lambda} F_N \to
\frac{\partial}{\partial \lambda} F_{\rm RS}$ at all values of $\lambda$ at which $F_{\rm RS}$ is differentiable, that is for almost every $\Delta > 0$. For these values of $\Delta$, one can also verify that the maximizer $m^*$ of $\phi_{\rm RS}$ is unique: we refer to \cite{lelarge2016fundamental} for a detailed proof in the matrix case $p=2$.
We thus obtain the following theorem:
\begin{theorem}[{\rm Tensor-MMSE}]\label{thm2}
	For almost every $\Delta >0$, $\phi_{\rm RS}$ admits a unique maximizer $m^*(\Delta)$ over $\R_+$ and
	$$
	\text{T-MMSE}_N \xrightarrow[N \to \infty]{} 
	\Sigma_X^p - m^*(\Delta)^p \,.
	$$
\end{theorem}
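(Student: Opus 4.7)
The plan is to deduce Theorem~\ref{thm2} from Theorem~\ref{thm1} via the I-MMSE identity displayed just above the statement, following the strategy carried out for the matrix case $p=2$ in \cite{lelarge2016fundamental}. The three ingredients are: (i) the I-MMSE identity relating $\partial_\lambda F_N$ to $\text{T-MMSE}_N$; (ii) convexity of $F_N$ in $\lambda = 1/\Delta$, which lets one pass derivatives to the limit; and (iii) an envelope-theorem computation that evaluates $\partial_\lambda F_{\rm RS}$ in closed form as $(m^*)^p/(2p)$.

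First I would establish the I-MMSE identity by differentiating $F_N = N^{-1}\E \log\mathcal{Z}_N$ under the expectation, applying Gaussian integration by parts in each noise variable $V_{i_1,\ldots,i_p}$, and using that the Bayes-optimal estimator of $(X^0)^{\otimes p}$ is its posterior mean. Since $\text{T-MMSE}_N$ is non-increasing in $\lambda$, the identity shows that $\partial_\lambda F_N$ is non-decreasing, so $F_N$ is convex in $\lambda$. Theorem~\ref{thm1} gives $F_N \to F_{\rm RS}$ pointwise; convexity passes to the limit, so $F_{\rm RS}$ is differentiable off a countable set, and by the standard Rockafellar theorem on pointwise convergence of derivatives of convex functions, $\partial_\lambda F_N(\lambda) \to \partial_\lambda F_{\rm RS}(\lambda)$ at every such point, i.e.\ for almost every $\Delta > 0$.

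Next I would evaluate $\partial_\lambda F_{\rm RS}(\lambda)$ via the envelope theorem. Writing $\phi_{\rm RS}(m;\lambda) = h(\lambda m^{p-1}) - \tfrac{p-1}{2p}\lambda m^p$, where $h(\alpha)$ is the scalar Bayesian free energy at effective SNR $\alpha$, a Gaussian integration by parts in $Z$ combined with the Nishimori identity gives $h'(\alpha) = \tfrac{1}{2}\E[\langle x \rangle^2]$. Substituting the stationarity condition $h'(\lambda (m^*)^{p-1}) = m^*/2$ into $\partial_\lambda \phi_{\rm RS}(m;\lambda) = m^{p-1} h'(\lambda m^{p-1}) - \tfrac{p-1}{2p}m^p$ produces $\partial_\lambda F_{\rm RS}(\lambda) = \partial_\lambda \phi_{\rm RS}(m^*;\lambda) = (m^*(\Delta))^p/(2p)$. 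Simultaneously, the envelope theorem forces all maximizers to yield the same value of this partial at any $\lambda$ of differentiability, so $(m^*)^p$---and hence $m^*\ge 0$---is unique there. Combining with the I-MMSE identity and the fact that $N(N-1)\cdots(N-p+1)/N^p \to 1$ gives $\text{T-MMSE}_N(\Delta) \to \Sigma_X^p - m^*(\Delta)^p$ for almost every $\Delta>0$.

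The main obstacle is the clean bookkeeping of the two a.e.-sets---the one on which $F_{\rm RS}$ is differentiable and the one on which $\phi_{\rm RS}$ has a unique maximizer---and verifying that the envelope argument above identifies the latter with (a full-measure subset of) the former. This is handled in detail for $p=2$ in \cite{lelarge2016fundamental}; the only differences in our setting are the exponent $p-1$ replacing $1$ in the effective SNR and the combinatorial prefactor tending to $1$, neither of which affects the structure of the argument.
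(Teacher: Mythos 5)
Your argument follows the same route the paper itself takes: the I-MMSE identity relating $\partial_\lambda F_N$ to $\text{T-MMSE}_N$, convexity of $F_N$ in $\lambda=1/\Delta$ to pass derivatives to the pointwise limit from Theorem~\ref{thm1}, and the identification $\partial_\lambda F_{\rm RS}=(m^*)^p/(2p)$ via the stationarity of $\phi_{\rm RS}$. The only difference is cosmetic: you spell out the envelope-theorem/Nishimori computation and the uniqueness-of-maximizer bookkeeping, which the paper delegates to \cite{lelarge2016fundamental}.
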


The information-theoretic threshold $\Delta_{\rm IT}$ is the maximal value of $\Delta$ such that ${\lim \text{T-MMSE}_N < \Sigma_{X}^p - \E_{P_X}[x]^{2p}}$ (which is the asymptotic performance achieved by random guess). We obtain thus the precise location of the information-theoretic threshold:
$$
\Delta_{\rm IT} = \sup \left\{ \Delta > 0 \, \middle| \, m^*(\Delta) > \E_{P_X}[x]^2 \right\}\,.
$$
Let $X=(x_1, \dots, x_N)$ be a sample from the posterior \eqref{posterior}, independently of everything else.
An extension of Theorem~2 of \cite{lelarge2016fundamental} (that was derived for priors $P_X$ with bounded support) to the tensor case, gives that for almost every $\Delta > 0$,
\begin{equation} \label{eq:overlap_p}
	\mathbb{E} \left|
	\left( \frac{1}{N}\sum\limits_{i=1}^N x_{i}^0 x_{i} \right)^{\!\! p}- m^*(\Delta)^p
	\right|
	\xrightarrow[N \to \infty]{} 0 \,,
\end{equation}
i.e.\ the $p^{\text{th}}$-power of the overlap $X \cdot X^0$ concentrates around $m^*$. This leads to
\begin{theorem}[Vector-MMSE for odd $p$]\label{thm3}
	Suppose that $P_X$ has a bounded support.
	If $p$ is odd, then for almost every $\Delta >0$
	$$
	\text{MMSE}_N \xrightarrow[N \to \infty]{} \Sigma_X  - m^*(\Delta).
	$$
\end{theorem}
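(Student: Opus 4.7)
The plan is to reduce $\text{MMSE}_N$ to an expectation of the rank-one overlap $X \cdot X^0 = \frac{1}{N}\sum_{i=1}^N x_i x_i^0$ via the Nishimori identity, and then to upgrade the $L^1$ convergence of the $p$-th power $(X \cdot X^0)^p$ provided by \eqref{eq:overlap_p} to convergence of the overlap itself, using that $y \mapsto y^p$ is a homeomorphism of bounded intervals when $p$ is odd.

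First, I would draw a replica $X = (x_1,\ldots,x_N)$ from the posterior \eqref{statphys} independently of $X^0$ conditionally on $Y$. Since $X \mid Y$ has the same law as $X^0 \mid Y$ and the two are conditionally independent given $Y$, we have $\mathbb{E}[X \mid Y] = \mathbb{E}[X^0 \mid Y]$ and hence $\mathbb{E}[(X^0)^\top X \mid Y] = \|\mathbb{E}[X^0 \mid Y]\|^2$. Taking expectations and dividing by $N$ yields $\mathbb{E}[X \cdot X^0] = \frac{1}{N}\mathbb{E}\|\mathbb{E}[X^0 \mid Y]\|^2$, so that
\[
\text{MMSE}_N \;=\; \Sigma_X \;-\; \mathbb{E}[X \cdot X^0].
\]
The theorem thus reduces to proving $\mathbb{E}[X \cdot X^0] \to m^*(\Delta)$ for almost every $\Delta > 0$.

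Second, with $P_X$ supported in some compact interval $[-C,C]$, the overlap satisfies $|X \cdot X^0| \leq C^2$ almost surely, and in particular $m^*(\Delta) \in [-C^2, C^2]$ (as the sign-preserving $p$-th root of the $L^1$ limit of the bounded sequence $(X \cdot X^0)^p$). On $[-C^2, C^2]$ the map $y \mapsto y^p$ is strictly monotone because $p$ is odd, hence a homeomorphism onto its image with uniformly continuous inverse. Thus for every $\epsilon > 0$ there exists $\delta > 0$ with
\[
\bigl\{|X \cdot X^0 - m^*(\Delta)| > \epsilon\bigr\} \;\subset\; \bigl\{|(X \cdot X^0)^p - m^*(\Delta)^p| > \delta\bigr\},
\]
so Markov's inequality applied to \eqref{eq:overlap_p} gives $X \cdot X^0 \to m^*(\Delta)$ in probability. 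Boundedness promotes this to $L^1$ convergence, producing the desired limit $\mathbb{E}[X \cdot X^0] \to m^*(\Delta)$.

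The main and essentially only obstacle is the parity of $p$. For even $p$, $y \mapsto y^p$ on $[-C^2, C^2]$ is two-to-one, and concentration of the $p$-th power around a non-negative value leaves the sign of the overlap undetermined. This is not a defect of the argument but a genuine feature of the model: for even $p$ the likelihood \eqref{AdditiveNoise_Case} is invariant under $X^0 \mapsto -X^0$, which forbids recovery of $X^0$ itself and makes the stated identity $\text{MMSE}_N \to \Sigma_X - m^*(\Delta)$ fail in general, consistent with the footnote-like remark following \eqref{MMSE} that additional assumptions are needed when hidden symmetries act nontrivially on $X^0$.
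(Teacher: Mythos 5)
Your proof is correct and takes essentially the same route as the paper: both reduce $\text{MMSE}_N$ to $\Sigma_X - \mathbb{E}\langle X \cdot X^0\rangle$ via the Nishimori identity (your conditional-independence computation $\mathbb{E}[(X^0)^\top X \mid Y] = \|\mathbb{E}[X^0\mid Y]\|^2$ is the same observation packaged differently) and then conclude from the concentration \eqref{eq:overlap_p} using the odd-$p$ injectivity of $y\mapsto y^p$. The only difference is that you spell out the final step --- boundedness from the compact support, uniform continuity of the odd $p$-th root on the relevant interval, convergence in probability, and boundedness upgrading it to $L^1$ --- which the paper compresses into the single clause ``because $p$ is here supposed to be odd.''
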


Before showing how \eqref{eq:overlap_p} implies Theorem~\ref{thm3} we need to introduce a fundamental property of Bayesian inference: the Nishimori identity.
\begin{proposition}[Nishimori identity]\label{th:nishimori}
	Let $(X,Y)$ be a couple of random variables on a polish space. Let $k \geq 1$ and let $X^{(1)}, \dots, X^{(k)}$ be $k$ i.i.d.\ samples (given $Y$) from the distribution $P(X=\cdot \, | \, Y)$, independently of every other random variables. Let us denote $\langle \cdot \rangle$ the expectation with respect to $P(X=\cdot \, | \, Y)$ and $\mathbb{E}$ the expectation with respect to $(X,Y)$. Then, for all continuous bounded function $f$
	$$
	\mathbb{E} \langle f(Y,X^{(1)}, \dots, X^{(k)}) \rangle
	=
	\mathbb{E} \langle f(Y,X^{(1)}, \dots, X^{(k-1)}, X) \rangle \,.
	$$
\end{proposition}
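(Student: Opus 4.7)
The plan is to observe that, conditional on $Y$, the ``planted'' variable $X$ and the replicas $X^{(1)}, \dots, X^{(k)}$ are all i.i.d.\ draws from the posterior $P(X=\cdot\,|\,Y)$: $X$ by the very definition of conditional distribution, and $X^{(1)},\dots,X^{(k)}$ by hypothesis (i.i.d.\ given $Y$ from that same law, and independent of $X$ given $Y$). Hence the $(k{+}1)$-tuple $(X,X^{(1)},\dots,X^{(k)})$ is exchangeable given $Y$, and swapping $X^{(k)}$ with $X$ leaves its joint law unchanged. The identity to be proved is then simply the equality of expectations of $f$ under these two identical laws.

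To formalize this, I would first invoke the polish space assumption to fix a regular conditional probability kernel $\mu_y(dx)$ for $X$ given $Y=y$. By the definition of $\langle\cdot\rangle$ and Fubini (applicable since $f$ is bounded),
\[
\mathbb{E}\langle f(Y, X^{(1)},\dots,X^{(k)})\rangle \;=\; \int P_Y(dy) \int f(y, x_1, \dots, x_k) \prod_{j=1}^{k} \mu_y(dx_j).
\]
For the right-hand side, the outer $\mathbb{E}$ integrates against the joint law $P_{X,Y}(dx,dy) = \mu_y(dx)\,P_Y(dy)$; integrating out $X^{(1)},\dots,X^{(k-1)}$ against $\mu_y$ and renaming the variable $x$ as $x_k$ yields exactly the same triple integral.

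I do not expect any real obstacle here: the statement is essentially tautological once both sides are unfolded. The only mild technicality is the existence of the regular conditional distribution $\mu_y(dx)$, which is precisely why the polish space hypothesis is included. Boundedness of $f$ makes all integrals finite and justifies Fubini without further care.
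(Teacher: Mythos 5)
Your argument is correct and is essentially the paper's own: both proofs rest on the observation that, conditionally on $Y$, the planted signal $X$ has the same law as a replica drawn from the posterior, so the $(k{+}1)$-tuple $(Y,X^{(1)},\dots,X^{(k)})$ is equal in distribution to $(Y,X^{(1)},\dots,X^{(k-1)},X)$. Your spelled-out version with the regular conditional kernel and Fubini just makes explicit the measure-theoretic bookkeeping that the paper leaves implicit.
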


\begin{proof}
	It is equivalent to sample the couple $(X,Y)$ according to its joint distribution or to sample first $Y$ according to its marginal distribution and then to sample $X$ conditionally to $Y$ from its conditional distribution $P(X=\cdot \, | \, Y)$. Thus the $(k+1)$-tuple $(Y,X^{(1)}, \dots,X^{(k)})$ is equal in law to $(Y,X^{(1)},\dots,X^{(k-1)},X)$.
\end{proof}

We will now use Proposition~\ref{th:nishimori} to prove Theorem~\ref{thm3}.

\begin{proof}[Proof of Theorem~\ref{thm3}]
	Let $\langle \cdot \rangle$ denote the expectation with respect to the posterior distribution $P(X^0=\cdot \ | \ Y)$, and let $X$ be a sample from this distribution, independently of everything else.
	The best estimator of $X^0$ in term of mean-squared error is the posterior mean $\langle X \rangle = (\langle x_1 \rangle, \dots, \langle x_N \rangle)$. Therefore
	\begin{align*}
		\text{MMSE}_N 
		&= \frac{1}{N} \mathbb{E}\left[
		\sum_{i=1}^N (x^0_i - \langle x_i \rangle)^2
	\right]
	= \frac{1}{N} \mathbb{E}\left[
		\sum_{i=1}^N (x^0_i)^2 + \langle x_i \rangle^2 - 2 \langle x_i^0 x_i \rangle
	\right]
	\\
	&= \Sigma_X + \mathbb{E} \langle X \cdot X' \rangle
	- 2 \mathbb{E} \langle X^0 \cdot X \rangle \,,
\end{align*}
where $X'$ is another sample from $\langle \cdot \rangle$, independently of everything else.
We apply now the Nishimori identity (Proposition~\ref{th:nishimori}) to obtain $\mathbb{E} \langle X \cdot X' \rangle = \mathbb{E} \langle X^0 \cdot X \rangle$. This gives
$$
\text{MMSE}_N = \Sigma_X - \mathbb{E} \langle X \cdot X^0 \rangle \,.
$$
We then deduce from \eqref{eq:overlap_p} that $\mathbb{E} \langle X \cdot X^0 \rangle \xrightarrow[N \to \infty]{} m^*$, because $p$ is here supposed to be odd. This concludes the proof.
\end{proof}

We will now prove Theorem~\ref{thm1}.  For the matrix case ($p=2$), this has been proved in \cite{krzakala2016mutual,
barbier2016mutual,lelarge2016fundamental} and we explain here how this can be adapted to the case $p \geq 2$. 
To prove the limit \eqref{eq:lim_F}, one shows successively an upper
bound on $\limsup F_N$ and the matching lower bound on $\liminf
F_N$. 
As shown in \cite{lelarge2016fundamental} (Section 6.2.2) one only need to prove Theorem~\ref{thm1} for input distributions $P_X$ with finite support $S$. We now assume to be in this situation.

\subsection{Adding a small perturbation} \label{sec:perturbation}

One of the key ingredient of the proof is the introduction of a small perturbation of our model, that takes the form of a small amount of side information.
This kind of techniques are frequently used for the study of spin glasses, where these small perturbations forces the Gibbs measure to verify some crucial identities, see \cite{panchenko2013sherrington}. In our context of Bayesian inference, we will see that small quantities of side information ``breaks'' the correlations of the signal variables under the posterior distribution.

Let us fix $\epsilon\in [0,1]$, and suppose we have access to the additional information, for $1 \leq i \leq N$
\begin{equation} \label{eq:pert}
	Y'_i =
	\begin{cases}
		x^0_i &\text{if } L_i = 1, \\
		* &\text{if } L_i = 0,
	\end{cases}
\end{equation}
where $L_i \iid \text{Ber}(\epsilon)$ and $*$ is a symbol that does not belong to $\R$. 
The posterior distribution of $X$ is now
$$
P(X| Y,Y') = \frac{1}{\cZ_{N,\epsilon}} \left(\prod_{i | Y'_i \neq *}1(x_i=Y'_i) \right)\left( \prod_{i | Y_i'=*} P_X(x_i) \right) e^{H_N(X)} \,,
$$
where $\cZ_{N,\epsilon}$ is the appropriate normalization constant.
For $X =(x_1, \dots, x_N)\in \R^N$ we will use the notation
\begin{equation} \label{eq:bar}
	\bar{X} = (L_1 x^0_1 + (1-L_1) x_1, \dots, L_N x^0_N + (1-L_N)x_N) \,.
\end{equation}
$\bar{X}$ is thus obtained by replacing the coordinates of $X$ that are revealed by $Y'$ by their revealed values. The notation $\bar{X}$ will allow us to obtain a convenient expression for the free energy of the perturbed model
$$
F_{N,\epsilon} = \frac{1}{N} \mathbb{E} \log \cZ_{N,\epsilon} = \frac{1}{N} \mathbb{E} \Big[ \log \sum_{X \in S^N} P_X(X) e^{H_{N}(\bar{X})}\Big] \,.
$$
The next lemma shows that the perturbation does not change the free energy up to the order of $\epsilon$. Recall that we supposed the support $S$ of $P_X$ to be finite, so we can find a constant $K$ such that $S \subset [-K,K]$.

\begin{lemma} \label{lem:approximation_f_n_epsilon}
	For all $n \geq 1$ and all $\epsilon,\epsilon' \in [0,1]$, we have
	$$
	|F_{N,\epsilon} - F_{N,\epsilon'} | \leq \frac{K^{2p}}{\Delta} |\epsilon - \epsilon'|.
	$$
\end{lemma}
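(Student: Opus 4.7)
The plan is to show that $\epsilon\mapsto F_{N,\epsilon}$ is continuously differentiable on $[0,1]$ and to bound $\partial_\epsilon F_{N,\epsilon}$ uniformly by $K^{2p}/\Delta$; the claimed Lipschitz estimate then follows by integration. Since the $L_i$ are i.i.d.\ $\mathrm{Ber}(\epsilon)$, $\mathbb{E}_L[\,\cdot\,]$ is a polynomial in $\epsilon$, so the regularity is automatic, and the standard identity $\partial_\epsilon\mathbb{E}_L[G(L)]=\sum_{i=1}^N \mathbb{E}_{L_{-i}}[G(L_{-i},1)-G(L_{-i},0)]$ applied to $G(L)=\log\mathcal{Z}_{N,\epsilon}$ gives
\[
\partial_\epsilon F_{N,\epsilon}=\frac{1}{N}\sum_{i=1}^N\mathbb{E}\big[\log\mathcal{Z}_{N,\epsilon}^{L_i=1}-\log\mathcal{Z}_{N,\epsilon}^{L_i=0}\big],
\]
where $\mathcal{Z}_{N,\epsilon}^{L_i=b}$ denotes the partition function with the $i$-th Bernoulli frozen to $b$ and the remaining $L_j$'s still i.i.d.\ $\mathrm{Ber}(\epsilon)$.

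Next I would identify each summand as a mutual information. Let $\tilde{\mathcal{Z}}(a)$ denote the partition function obtained from $\mathcal{Z}_{N,\epsilon}^{L_i=1}$ by replacing the revealed value $x_i^0$ by an arbitrary $a\in S$ (with $Y$ kept fixed). Because $x_i$ does not appear in $H_N(\bar X)$ when $L_i=1$, the trivial factor $\sum_{x_i}P_X(x_i)=1$ yields $\mathcal{Z}_{N,\epsilon}^{L_i=1}=\tilde{\mathcal{Z}}(x_i^0)$, while expanding the $x_i$-sum gives $\mathcal{Z}_{N,\epsilon}^{L_i=0}=\sum_{a\in S}P_X(a)\tilde{\mathcal{Z}}(a)$. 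Since the Gibbs measure is the Bayesian posterior, Bayes' rule identifies $P_X(a)\tilde{\mathcal{Z}}(a)/\mathcal{Z}_{N,\epsilon}^{L_i=0}$ with the posterior probability $\mathbb{P}(x_i^0=a\mid Y,X^0_J)$, where $J=\{j\neq i:L_j=1\}$ collects the revealed coordinates. Evaluating at $a=x_i^0$ and taking expectations,
\[
\mathbb{E}\big[\log\mathcal{Z}_{N,\epsilon}^{L_i=1}-\log\mathcal{Z}_{N,\epsilon}^{L_i=0}\big]
= \mathbb{E}\big[\log\mathbb{P}(x_i^0\mid Y,X^0_J)-\log P_X(x_i^0)\big]
= I(x_i^0;Y,X^0_J),
\]
which is non-negative; by the chain rule and the independence of $x_i^0$ from $X^0_{-i}$, it is in turn bounded above by $I(x_i^0;Y\mid X^0_{-i})$.

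To control $I(x_i^0;Y\mid X^0_{-i})$, I would observe that, conditionally on $X^0_{-i}$, the observations $Y_I$ with $i\in I$ form a Gaussian channel
\[
Y_I=\beta_I\,x_i^0+V_I,\quad \beta_I=\frac{\sqrt{(p-1)!}}{N^{(p-1)/2}}\prod_{j\in I\setminus\{i\}}x_j^0,\quad V_I\stackrel{\mathrm{iid}}{\sim}\cN(0,\Delta),
\]
while the remaining $Y_I$'s (with $i\notin I$) are independent of $x_i^0$. The sufficient statistic $T=\sum_{I\ni i}\beta_I Y_I$ has law $\cN(A\,x_i^0,\Delta A)$ with $A=\sum_{I\ni i}\beta_I^2$, so the standard Gaussian capacity bound yields $I(x_i^0;Y\mid X^0_{-i})\leq \tfrac12\log(1+A\,\Sigma_X/\Delta)\leq A\,\Sigma_X/(2\Delta)$. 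The key deterministic estimate is the elementary-symmetric-polynomial inequality $e_{p-1}(y_1,\dots,y_{N-1})\leq(\sum_j y_j)^{p-1}/(p-1)!$ for $y_j\geq 0$, applied to $y_j=(x_j^0)^2\leq K^2$, which gives $A\leq K^{2(p-1)}$. Combined with $\Sigma_X\leq K^2$, this produces $I(x_i^0;Y\mid X^0_{-i})\leq K^{2p}/(2\Delta)$, whence $|\partial_\epsilon F_{N,\epsilon}|\leq K^{2p}/(2\Delta)\leq K^{2p}/\Delta$ uniformly in $\epsilon\in[0,1]$.

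The only slightly delicate step is the rewriting of $\partial_\epsilon F_{N,\epsilon}$ as an expected mutual information via the Bayes identification of $\tilde{\mathcal{Z}}$ with a conditional posterior; the remainder is a routine Gaussian-channel capacity estimate once the correct sufficient statistic is recognised. The argument is a direct tensor analogue of the corresponding matrix computation of \cite{lelarge2016fundamental}, so I do not anticipate substantial obstacles.
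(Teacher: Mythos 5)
Your proof is correct, and it follows the same strategy the paper implicitly defers to when it says the lemma ``follows from a direct adaptation of Proposition~23 from \cite{lelarge2016fundamental}'': differentiate $F_{N,\epsilon}$ via the Bernoulli identity, recognize each term as a mutual information $I(x_i^0;Y,X^0_J)$ through Bayes' rule, dominate it by $I(x_i^0;Y\mid X^0_{-i})$ via the chain rule (legitimate here since $x_i^0\perp X^0_{-i}$), and close with the Gaussian-channel capacity bound. The only genuinely tensor-specific ingredient --- and the one the word ``adaptation'' in the paper is hiding --- is the bound $A=\frac{(p-1)!}{N^{p-1}}\,e_{p-1}\bigl((x_j^0)^2:j\neq i\bigr)\leq K^{2(p-1)}$ via the elementary-symmetric-polynomial inequality, which you supply correctly; as a bonus your argument actually gives the slightly better constant $K^{2p}/(2\Delta)$.
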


Lemma~\ref{lem:approximation_f_n_epsilon} follows from a direct adaptation of Proposition~23 from \cite{lelarge2016fundamental} to the tensor case.
Consequently, if we suppose $\epsilon \sim \mathcal{U}([0,1])$ and define $\epsilon_N = N^{-1/2} \epsilon$ and $L_i \iid \text{Ber}(\epsilon_N)$, independently of everything else, we have
\begin{equation} \label{eq:pert_f}
	|F_N - \mathbb{E}_{\epsilon}[F_{N,\epsilon_N}] | \xrightarrow[N \to \infty]{} 0 \,,
\end{equation}
where $\mathbb{E}_{\epsilon}$ denotes the expectation with respect to $\epsilon$ only. It remains therefore to compute the limit of the free energy under a small perturbation.
As shown in \cite{andrea2008estimating}, the perturbation \eqref{eq:pert} forces the correlations to vanish asymptotically. 

\begin{lemma}[Lemma 3.1 from \cite{andrea2008estimating}]\label{lem:correlations}
	$$
	\mathbb{E}_{\epsilon}  \left[ \frac{1}{N^2} \sum_{1 \leq i,j \leq N} I(x^0_i;x^0_j \, | \, Y,Y') \right] \leq \frac{2 H(P_X)}{\sqrt{N}} \,.
	$$
\end{lemma}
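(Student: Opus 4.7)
The plan is to follow the argument in \cite{andrea2008estimating}, where such ``decorrelation by random revelation'' lemmas originated: the perturbation at rate $\epsilon_N = \epsilon/\sqrt N$ reveals only $O(\sqrt N)$ coordinates on average, but the averaging over $\epsilon \in [0,1]$ produces enough cumulative information flux to force the posterior pairwise correlations to decay at rate $1/\sqrt N$.

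First I would record the elementary identity obtained by conditioning on $L_i$: since $Y'_i = *$ (probability $1-\epsilon_N$) reveals nothing about $x^0_i$ while $Y'_i = x^0_i$ (probability $\epsilon_N$) reveals it completely, one has
$$I(x^0_i; Y'_i \mid Y, Y'_{<i}) = \epsilon_N \, H(x^0_i \mid Y, Y'_{<i}).$$
Summing over $i$ via the chain rule of mutual information and combining with the trivial bound $I(X^0; Y' \mid Y) \leq H(X^0) \leq N H(P_X)$ yields the basic ``entropy budget''
$$\int_0^{1/\sqrt N} \sum_{i=1}^N H\bigl(x^0_i \mid Y, Y'_{\setminus i}(\epsilon_N)\bigr) \, d\epsilon_N \;\leq\; N\, H(P_X),$$
since the integrand can be rewritten as $-\frac{d}{d\epsilon_N} H(X^0 \mid Y, Y'(\epsilon_N))$ by the same splitting on $L_i$, and the total decrement of this entropy over $\epsilon_N \in [0, 1/\sqrt N]$ is at most $H(X^0 \mid Y) \leq N H(P_X)$.

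Second, I would convert $\sum_{i,j} I(x^0_i; x^0_j \mid Y, Y')$ into a form controlled by this budget. Using $I(x^0_i; x^0_j \mid Z) \leq I(x^0_i; X^0_{\setminus i} \mid Z)$ for $j \neq i$ and $I(x^0_i; x^0_i \mid Z) = H(x^0_i \mid Z)$, the double sum is bounded by $N \sum_i I(x^0_i; X^0_{\setminus i} \mid Y, Y') + N H(P_X)$, so the task reduces to bounding $\mathbb{E}_\epsilon \sum_i I(x^0_i; X^0_{\setminus i} \mid Y, Y')$ by $O(\sqrt N\, H(P_X))$. The heart of the argument in \cite{andrea2008estimating} is to relate this sum to the entropy-derivative budget through a KL--Pinsker argument: one writes each $I(x^0_i; X^0_{\setminus i} \mid Y, Y')$ as the expected Kullback--Leibler divergence of the joint posterior against its product, applies Pinsker's inequality, and uses the permutation symmetry of the joint law of $(X^0, Y, Y')$ to equate the various pairwise contributions.

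Finally the change of variables $\mathbb{E}_\epsilon[\,\cdot\,] = \sqrt N \int_0^{1/\sqrt N}(\cdot) \, d\epsilon_N$ converts the integrated entropy budget into the desired $O(1/\sqrt N)$ average, delivering the factor $2H(P_X)/\sqrt N$ after dividing by $N^2$ and absorbing the lower-order diagonal contribution. The main technical obstacle is the middle step: the naive bound $\sum_j I(x^0_i; x^0_j \mid Y, Y') \leq N H(P_X)$ loses a factor of $\sqrt N$, and the delicate part is the Pinsker-type link between entropy decrements and pairwise mutual information via the KL reformulation, which is precisely where the proof of Lemma~3.1 in \cite{andrea2008estimating} exerts its main effort.
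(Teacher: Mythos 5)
The paper itself does not prove this lemma --- it simply cites Lemma~3.1 of \cite{andrea2008estimating} --- so there is no in-paper argument to compare against; I will assess your sketch on its own. The first step is sound: coupling the Bernoulli masks at rates $\epsilon_N$ and $\epsilon_N + d\epsilon_N$ gives the derivative identity
$$-\frac{d}{d\epsilon_N} H\big(X^0 \,\big|\, Y, Y'(\epsilon_N)\big) = \sum_{i=1}^N H\big(x^0_i \,\big|\, Y, Y'_{\setminus i}(\epsilon_N)\big),$$
and integrating over $\epsilon_N \in [0, N^{-1/2}]$, then bounding the total entropy drop by $H(X^0 \,|\, Y) \leq N H(P_X)$, yields your budget.

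The gap is in the middle step. The inequality $\sum_{j\neq i} I(x^0_i;x^0_j\,|\,Y,Y') \leq (N-1)\,I(x^0_i;X^0_{\setminus i}\,|\,Y,Y')$ is correct but far too lossy, and the quantity you would then need, $\mathbb{E}_\epsilon \sum_i I(x^0_i;X^0_{\setminus i}\,|\,Y,Y') = O(\sqrt N\, H(P_X))$, is not implied by the budget: each of the $N$ terms can be of order $H(P_X)$ even when pairwise correlations are tiny (e.g.\ a posterior concentrated near a parity constraint, where every $x^0_i$ is determined by $X^0_{\setminus i}$ yet pairwise mutual informations are negligible), and no Pinsker-type step repairs this --- Pinsker converts KL into total variation, the opposite direction of what you need, and there is no useful appeal to permutation symmetry here. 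The correct argument (which is what \cite{andrea2008estimating} does) does not pass through $I(x^0_i;X^0_{\setminus i})$ at all: one differentiates a second time. The same coupling applied to each term of the budget gives
$$\frac{d^2}{d\epsilon_N^2} H\big(X^0\,\big|\,Y,Y'(\epsilon_N)\big) = \sum_{i\neq j} I\big(x^0_i;x^0_j\,\big|\,Y,Y'_{\setminus i,j}(\epsilon_N)\big) \geq 0,$$
so the integral of the right-hand side over $[0,N^{-1/2}]$ equals the increment of the first derivative, bounded by $-H'(0)=\sum_i H(x^0_i\,|\,Y)\leq N H(P_X)$. One then observes $I(x^0_i;x^0_j\,|\,Y,Y') = (1-\epsilon_N)^2\, I(x^0_i;x^0_j\,|\,Y,Y'_{\setminus i,j}) \leq I(x^0_i;x^0_j\,|\,Y,Y'_{\setminus i,j})$ (the mutual information vanishes on $\{L_i=1\}\cup\{L_j=1\}$), applies the change of variables $\mathbb{E}_\epsilon[\,\cdot\,]=\sqrt N\int_0^{N^{-1/2}}(\cdot)\,d\epsilon_N$, and divides by $N^2$ to get the off-diagonal contribution $\leq H(P_X)/\sqrt N$; the diagonal $\sum_i H(x^0_i\,|\,Y,Y')/N^2 \leq H(P_X)/N$ supplies the remaining part of the factor $2$.
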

Let us write $\langle \cdot \rangle$ the expectation with respect to $P(X= \cdot \ | \ Y,Y')$, and let $X^{(1)}, X^{(2)}$ be two independents samples from $P(X= \cdot \ | \ Y,Y')$, independently of everything else. We define $ Q = \langle X^{(1)} \cdot X^{(2)} \rangle$. Notice that $Q$ is a non-negative random variable.
As a consequence of Lemma~\ref{lem:correlations}, the overlaps under the posterior distribution concentrates around $Q$:
\begin{lemma} \label{lem:overlap_concentration}
	\begin{align}
		&\mathbb{E} \left\langle \left( X^{(1)} \cdot X^{(2)} - Q \right)^2 \right\rangle \xrightarrow[N \to \infty]{} 0 \quad \text{and} \quad
		\mathbb{E} \left\langle \left( X^{(1)} \cdot X^0 - Q \right)^2 \right\rangle \xrightarrow[N \to \infty]{} 0 \,,
		\label{eq:ov1}
	\end{align}
	where $\mathbb{E}$ denotes the expectation with respect all random variables.
\end{lemma}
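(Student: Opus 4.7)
The plan is to reduce both concentration statements to a single one via the Nishimori identity, then control the remaining expression by a double sum of posterior covariances, which Pinsker's inequality relates to the conditional mutual informations bounded in Lemma~\ref{lem:correlations}. For the reduction, since $X^0$ given $(Y,Y')$ follows the posterior, the pair $(X^{(1)}, X^{(2)})$ has the same joint law as $(X^{(1)}, X^0)$ under $\mathbb{E}\langle\cdot\rangle$. Because $Q$ is a function of $(Y,Y')$ alone, applying Proposition~\ref{th:nishimori} to the function $f(Y,Y',u,v) = (u\cdot v - Q(Y,Y'))^2$ yields $\mathbb{E}\langle(X^{(1)} \cdot X^0 - Q)^2\rangle = \mathbb{E}\langle(X^{(1)} \cdot X^{(2)} - Q)^2\rangle$, so it suffices to prove the first limit in \eqref{eq:ov1}.

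The replicas $X^{(1)}, X^{(2)}$ are independent under $\langle\cdot\rangle$, hence $\langle x_i^{(1)} x_i^{(2)}\rangle = \langle x_i\rangle^2$, $Q = \frac{1}{N}\sum_i \langle x_i\rangle^2$, and expanding the square gives
\begin{equation*}
\mathbb{E}\langle (X^{(1)} \cdot X^{(2)} - Q)^2 \rangle = \frac{1}{N^2}\sum_{i,j} \mathbb{E}\bigl[\langle x_i x_j\rangle^2 - \langle x_i\rangle^2 \langle x_j\rangle^2\bigr].
\end{equation*}
Each summand factors as $(\langle x_i x_j\rangle - \langle x_i\rangle \langle x_j\rangle)(\langle x_i x_j\rangle + \langle x_i\rangle \langle x_j\rangle)$: the second factor is bounded by $2K^2$ since $S\subset[-K,K]$, while the first is the posterior covariance of $(x_i, x_j)$ given $(Y,Y')$, which I would bound pointwise in $(Y,Y')$ by $2K^2 \|P_{ij} - P_i \otimes P_j\|_{\mathrm{TV}}$, where $P_{ij}, P_i, P_j$ denote the posterior marginals. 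Pinsker's inequality then controls this total-variation distance by the square root of the KL divergence $\mathrm{KL}(P_{ij}\,\|\,P_i\otimes P_j)$, still pointwise in $(Y,Y')$.

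Taking expectation over $(Y,Y')$ turns this KL divergence into the conditional mutual information $I(x_i^0; x_j^0 \mid Y,Y')$; applying Jensen's inequality (to pull the square root outside the expectation) and Cauchy--Schwarz (to pull the sum inside), then averaging over $\epsilon$ gives
\begin{equation*}
\mathbb{E}_\epsilon \frac{1}{N^2}\sum_{i,j}\mathbb{E}\bigl|\langle x_i x_j\rangle^2 - \langle x_i\rangle^2 \langle x_j\rangle^2\bigr| \,\le\, 4K^4 \sqrt{\tfrac{1}{2} \, \mathbb{E}_\epsilon \frac{1}{N^2} \sum_{i,j} I(x_i^0; x_j^0 \mid Y,Y')} \,=\, O(N^{-1/4}),
\end{equation*}
where the last equality uses Lemma~\ref{lem:correlations}. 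The principal obstacle is bookkeeping the three nested averages (posterior over replicas, data over $(Y,Y')$, and perturbation over $\epsilon$) and verifying that Pinsker's inequality is applied pointwise in $(Y,Y')$ so that the subsequent expectation reproduces exactly the conditional mutual information appearing in Lemma~\ref{lem:correlations}. The finiteness of the support of $P_X$ is essential here, as the $L^\infty$ bound on $x_i$ is what converts total-variation distance into a covariance bound in the first place.
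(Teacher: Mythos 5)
Your proof is correct and is essentially the argument that the paper delegates to Section~4.4 of \cite{lelarge2016fundamental}: the Nishimori reduction of the second limit to the first, the factorization of $\langle x_i x_j\rangle^2 - \langle x_i\rangle^2\langle x_j\rangle^2$ into a $2K^2$-bounded factor times a posterior covariance, and the Pinsker bound tying that covariance to the conditional mutual informations controlled by Lemma~\ref{lem:correlations} form exactly the standard chain. The nested Jensen and Cauchy--Schwarz applications are carried out correctly and give the claimed $O(N^{-1/4})$ rate.
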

Lemma~\ref{lem:overlap_concentration} follows from the arguments of section 4.4 from \cite{lelarge2016fundamental}.

The arguments presented in this section are robust and apply to a large class of Hamiltonians. In particular, we will be able to apply in the sequel Lemmas~\ref{lem:approximation_f_n_epsilon} and~\ref{lem:overlap_concentration} to other Hamiltonians and posterior distributions (and corresponding free energies).

\subsection{Guerra's interpolation scheme}
The lower bound is obtained by extending
the bound derived for $p=2$ in \cite{krzakala2016mutual}, using a Guerra-type interpolation \cite{Guerra2003}
as was already done for tensors by Korada and Macris in \cite{korada2009exact} (who consider tensors in the special case of Rademacher $P_X$). 

\begin{lemma}
	$$
	\liminf_{N \to \infty} F_N \geq \sup_{m \geq 0} \phi_{\rm RS}(m) \,.
	$$
\end{lemma}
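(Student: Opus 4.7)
My plan is to prove this lower bound by a Guerra-type interpolation, as suggested in the excerpt, adapting the matrix ($p=2$) argument of \cite{krzakala2016mutual} and the Rademacher tensor argument of \cite{korada2009exact} to arbitrary $P_X$ and $p\geq 2$. Fix $m \geq 0$ and put $\widehat m = m^{p-1}/\Delta$. I introduce an interpolation parameter $t \in [0,1]$ and, keeping the perturbation of Section~\ref{sec:perturbation} in force throughout, consider the auxiliary model in which one observes simultaneously (i) the tensor \eqref{AdditiveNoise_Case} at noise level $\Delta/t$ and (ii) $N$ independent scalar Gaussian side observations $Y'_i = \sqrt{(1-t)\widehat m}\, x_i^0 + Z_i$, with $Z_i \sim \cN(0,1)$. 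Denote the corresponding perturbed free energy by $\varphi_N(t)$. Then $\varphi_N(1) = F_{N,\epsilon_N}$, which equals $F_N$ up to $o_N(1)$ by \eqref{eq:pert_f}, while at $t = 0$ the tensor term disappears and the posterior factorizes across coordinates, giving
\[
\varphi_N(0) = \mathbb{E}\log\!\int\! dP_X(x)\,\exp\!\Bigl(\sqrt{\widehat m}\,Z\,x + \widehat m\,x^0 x - \tfrac12 \widehat m\,x^2\Bigr),
\]
which is exactly the scalar term of $\phi_{\rm RS}(m)$ in \eqref{phi_rs_1}.

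The next step is to differentiate $\varphi_N$ in $t$, performing Gaussian integration by parts on both the tensor and the scalar noise and using the Nishimori identity (Proposition~\ref{th:nishimori}) to convert two-replica overlaps $\langle(X^{(1)}\cdot X^{(2)})^k\rangle$ into $\langle q^k\rangle$ with $q = X \cdot X^0$. The tensor term contributes $\tfrac{1}{2p\Delta}\mathbb{E}\langle q^p\rangle_t$ and the scalar term contributes $-\tfrac{m^{p-1}}{2\Delta}\mathbb{E}\langle q\rangle_t$, both up to an $o_N(1)$ error arising from the $i_1<\dots<i_p$ constraint; rearranging,
\begin{align*}
\varphi_N'(t) &= \frac{1}{2p\Delta}\mathbb{E}\langle q^p\rangle_t - \frac{m^{p-1}}{2\Delta}\mathbb{E}\langle q\rangle_t + o_N(1)\\
&= -\frac{p-1}{2p\Delta}\,m^p + \frac{1}{2p\Delta}\,\mathbb{E}\bigl\langle q^p - p\,m^{p-1}q + (p-1)m^p\bigr\rangle_t + o_N(1).
\end{align*}
The tangent-line inequality from convexity of $x\mapsto x^p$ on $[0,\infty)$ gives that the bracketed expression is non-negative whenever $q \geq 0$, so $\varphi_N'(t) \geq -\tfrac{p-1}{2p\Delta}m^p + o_N(1)$. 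Integrating from $0$ to $1$ and using \eqref{eq:pert_f} yields $\liminf_{N\to\infty} F_N \geq \varphi_N(0) - \tfrac{p-1}{2p\Delta}m^p = \phi_{\rm RS}(m)$, and taking the supremum over $m \geq 0$ finishes the proof.

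The main obstacle is ensuring the overlap is effectively non-negative so that the convexity inequality applies: for odd $p$, $x\mapsto x^p$ is not convex on $\mathbb{R}$ and the tangent-line bound can fail when $q < 0$. This is precisely where the perturbation of Section~\ref{sec:perturbation} is indispensable. Applied to the interpolating model --- and the arguments of that section are, as stressed there, robust under adding a scalar Gaussian side channel to the Hamiltonian, so Lemmas~\ref{lem:approximation_f_n_epsilon} and~\ref{lem:overlap_concentration} transfer verbatim to each $\varphi_N(t)$ --- Lemma~\ref{lem:overlap_concentration} shows that under the perturbed posterior, $q$ concentrates around $Q = \langle X^{(1)}\cdot X^{(2)}\rangle \geq 0$. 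Since $P_X$ has finite support, $q$ and $Q$ are uniformly bounded, and one may replace $q$ by $Q$ inside $\mathbb{E}\langle q^p - p m^{p-1}q + (p-1)m^p\rangle_t$ at the cost of a vanishing error, then apply the convexity bound with $Q$ in place of $q$. The rest is standard Guerra-type bookkeeping: checking the Gaussian integration by parts and verifying that the $o_N(1)$ terms are uniform in $t \in [0,1]$ and in the perturbation parameter $\epsilon \in [0,1]$.
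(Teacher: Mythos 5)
Your proposal is correct and follows essentially the same route as the paper: a Guerra interpolation between the tensor channel and a scalar Gaussian side channel with SNR $m^{p-1}/\Delta$, differentiation via Gaussian integration by parts plus Nishimori, the tangent-line bound $q^p - pm^{p-1}q + (p-1)m^p \geq 0$ for $q \geq 0$, and the perturbation of Section~\ref{sec:perturbation} to force the overlap to concentrate on a non-negative value so that the convexity bound applies even for odd $p$. The only cosmetic differences are that you normalize the side-channel noise to unit variance (the paper uses variance $\Delta$ with the same effective SNR) and that you carry the perturbation explicitly from the outset, whereas the paper first presents the unperturbed computation and then remarks that the perturbation must be added.
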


\begin{proof}
	We use a Guerra-type interpolation \cite{Guerra2003}: Let $0 \leq t \leq 1$ and $m \in \R_+$. We suppose to observe $Y$ and $\tilde{Y}$ given by
	{
		\arraycolsep=1.4pt\def\arraystretch{1.8}
		$$
		\left\{
		\begin{array}{clll}
			Y_{i_1,\dots, i_p} &=& \displaystyle\frac{\sqrt{t (p-1)!}}{N^{(p-1)/2}} x^0_{i_1} \dots x^0_{i_p} + V_{i_1, \dots, i_p} & \quad  \text{for } 1 \leq i_1 < \dots < i_p \leq N \\
			\tilde{Y}_{j} &=& \sqrt{(1-t) m^{p-1}} x^0_j + \tilde{V}_{j} & \quad \text{for } 1 \leq j \leq N
		\end{array}
		\right.
		$$
	}
	where the variables $V_{i_1, \dots, i_p}$ and $\tilde{V}_j$ are i.i.d.\ $\mathcal{N}(0,\Delta)$ random variables. We define the interpolating Hamiltonian 
		\begin{align*}
			H_{N,t}(X)
			&= \Delta^{-1}  \sum_{i_1< \dots < i_p} \frac{\sqrt{t (p-1)!}}{N^{(p-1)/2}} Y_{i_1, \dots, i_p} x_{i_1}\dots x_{i_p} - \frac{t (p-1)!}{2N^{p-1}} (x_{i_1} \dots x_{i_p})^2 
			\\
			&+ \Delta^{-1} \sum_{j=1}^N \sqrt{(1-t) m^{p-1}} \tilde{Y}_j x_j - \frac{1}{2} (1-t) m^{p-1} x_j^2.
		\end{align*}
	Then, the posterior distribution of $X^0$ given $Y$ and $\tilde{Y}$ reads
	\begin{equation}\label{eq:interpolation_posterior}
	P(X^0=X | Y,\tilde{Y}) = \frac{1}{\cZ_{N,t}} P_X(X) \exp(H_{N,t}(X)) \,,
	\end{equation}
	where $\cZ_{N,t}$ is the appropriate normalization. Let $\psi_N(t)=\frac{1}{N} \mathbb{E}[\log \cZ_{N,t}]$ be the corresponding free energy. 
	Notice that 
	$$
	\left\{
	\begin{array}{rcl}
		\psi_N(1) &=& F_N \,, \\
		\psi_N(0)&=& \phi_{\rm RS}(m)- \frac{(1-p) m^p}{2 \Delta p} \,.
	\end{array}
	\right.
	$$
	Let $\langle \cdot \rangle_t$ denote the expectation with respect to the posterior \eqref{eq:interpolation_posterior} and let $X$ be a sampled from \eqref{eq:interpolation_posterior}, independently of everything else.

	Using Gaussian integration by parts and the Nishimori identity of Proposition~\ref{th:nishimori} one can show (see \cite{krzakala2016mutual,lelarge2016fundamental}) that for all $0 \leq t \leq 1$
	$$
	\psi_N'(t) = \frac{1}{2 \Delta p} \mathbb{E} \left\langle \left(X \cdot X^0 \right)^p - p m^{p-1}\left(X \cdot X^0 \right) \right\rangle_t +o_N(1) \,.
	$$
	By convexity of the function $a \mapsto a^p$ on $\R_+$ we have, for all $a,b \geq 0$: $a^p - p b^{p-1} a \geq (1-p) b^p$. We would like to use this inequality with $a=X\cdot X^0$ and $b=m$ to obtain that $\psi_N'(t) \geq \frac{(1-p) m^p}{2 \Delta p}$. This would conclude the proof of the lower bound because
	\begin{align*}
		\liminf_{N \to \infty} F_N = \liminf_{N \to \infty} \psi_N(1) &= \liminf_{N \to \infty} \left[\psi_N(0) + \int_0^1 \psi'_N(t) dt \right] \\
																	  &\geq \phi_{\rm RS}(m) \,.
	\end{align*}
	However, we do not know that $X \cdot X^0 \geq 0$ almost surely. 
	To bypass this issue we can add, as in sec.~\ref{sec:perturbation}, a small perturbation \eqref{eq:pert} that forces $X \cdot X^0$ concentrates around a non-negative value (Lemma~\ref{lem:overlap_concentration}), without affecting the ``interpolating free energy'' $\psi_N(t)$ in the $N \to \infty$ limit, see~\eqref{eq:pert_f}. The arguments are the same than in sec.~\ref{sec:perturbation}, so we omit the details and the rewriting of the previous calculations with the perturbation term. This concludes the proof.
\end{proof}

\subsection{Proving the upper-bound: Aizenman-Sims-Starr scheme}

We are now going to show how the arguments of \cite{lelarge2016fundamental} for the upper
bound ---using cavity computations with an Aizenman-Sims-Starr
approach~\cite{aizenman2003extended}--- can be extended to
the tensor case.

\begin{lemma}
	$$
	\limsup_{N \to \infty} F_N \leq \sup_{m \geq 0} \phi_{\rm RS}(m) \,.
	$$
\end{lemma}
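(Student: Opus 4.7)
The plan is to extend the Aizenman--Sims--Starr (ASS) cavity argument of \cite{lelarge2016fundamental} from $p=2$ to general $p\ge 2$. The starting point is the elementary inequality
$$\limsup_{N\to\infty} F_N \;\le\; \limsup_{N\to\infty} \bigl(A_{N+1} - A_N\bigr), \qquad A_N = \mathbb{E}\bigl[\log {\cal Z}_N\bigr],$$
which reduces the problem to bounding the one-spin free-energy increment by $\sup_{m\ge 0}\phi_{\rm RS}(m) + o_N(1)$. As in the lower bound, all computations will be done on the perturbed model of Section~\ref{sec:perturbation}: by \eqref{eq:pert_f} this does not change the limit of $F_N$, and Lemma~\ref{lem:overlap_concentration} guarantees that both replica--replica and signal--replica overlaps concentrate around a single non-negative random variable $Q = \langle X^{(1)}\cdot X^{(2)}\rangle$.

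The first step is an ASS decomposition of $A_{N+1}-A_N$ into a cavity and an on-site piece. One splits the $(N+1)$-Hamiltonian according to whether the index $N+1$ appears or not,
$$H_{N+1}(X,x_{N+1}) \;=\; \widetilde H_N(X) \,+\, h^{\rm cav}(X,x_{N+1}),$$
where $\widetilde H_N$ has the same structure as $H_N$ but with normalisation $(N+1)^{-(p-1)/2}$, and $h^{\rm cav}$ gathers the $\binom{N}{p-1}$ tensor entries with top index $N+1$; the latter is, conditionally on $X^0$, a Gaussian perturbation of $x_{N+1}$ whose slope is proportional to the $(p-1)$-st power of the overlap $X\cdot X^0$. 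A Gaussian interpolation between $\widetilde H_N$ and $H_N$ (as in \cite{lelarge2016fundamental}, Section~6) then yields
$$A_{N+1} - A_N \;=\; \mathbb{E}\log\langle \mathcal{A}_N\rangle_N \,-\, \mathbb{E}\log\langle \mathcal{B}_N\rangle_N \,+\, o_N(1),$$
where $\langle \cdot\rangle_N$ denotes the Gibbs average under the $N$-spin posterior, $\mathcal{A}_N = \int dP_X(x_{N+1})\,e^{h^{\rm cav}(X,x_{N+1})}$ is a scalar cavity partition function, and $\mathcal{B}_N$ is a pure function of overlaps produced by the rescaling of $H_N$.

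The second step is to collapse $\mathcal A_N$ and $\mathcal B_N$ into deterministic functionals of $Q$. The Nishimori identity (Proposition~\ref{th:nishimori}) converts every diagonal overlap $X\cdot X$ produced by the Gaussian integration by parts into a signal--replica overlap $X\cdot X^0$, and the concentration estimate \eqref{eq:ov1}, together with a Lipschitz-in-overlap bound on $\log\mathcal{A}_N$ and $\log\mathcal{B}_N$ coming from the boundedness of the support of $P_X$, lets one replace every occurrence of $X\cdot X^0$ by $Q$ at cost $o_N(1)$. After substitution, $\mathcal A_N$ becomes the partition function of an effective scalar Bayesian AWGN channel of SNR $Q^{p-1}/\Delta$, while $\mathcal B_N$ becomes the deterministic exponential of $-(p-1)Q^p/(2p\Delta)$. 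Summed, these two contributions are exactly $\phi_{\rm RS}(Q)$, so
$$A_{N+1}-A_N \;\le\; \mathbb{E}[\phi_{\rm RS}(Q)] + o_N(1) \;\le\; \sup_{m\ge 0}\phi_{\rm RS}(m) + o_N(1),$$
which is the claimed upper bound.

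The main obstacle I anticipate is the combinatorics for $p\ge 3$: Gaussian integration by parts on $\widetilde H_N - H_N$ produces several overlap monomials of total degree $p$ (one must pair the derivatives over $p-1$ free tensor legs), and one has to check carefully that, once Nishimori has converted diagonal overlaps into signal--replica overlaps and concentration around $Q$ has been used, these monomials reassemble into exactly the single coefficient $-(p-1)/(2p\Delta)$ attached to $m^p$ in \eqref{phi_rs_1}. The remaining ingredients --- the $O(1/N)$ rescaling corrections between $\widetilde H_N$ and $H_N$, the transfer of the overlap-concentration lemma to the interpolated Hamiltonian, and the passage to the perturbed model --- are direct adaptations of the matrix-case arguments of \cite{lelarge2016fundamental}.
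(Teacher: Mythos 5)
Your proposal follows essentially the same Aizenman--Sims--Starr cavity route as the paper: it bounds $\limsup F_N$ by the Cesàro increment, decomposes $H_{N+1}$ into the $(N+1)$-normalised part plus a scalar cavity term, matches $H_N$ in law with that same rescaled part plus a correction $y(X)$, and then uses the perturbation machinery together with Nishimori and Lemma~\ref{lem:overlap_concentration} to replace all overlaps by a single non-negative concentrating variable $Q$, so that the increment collapses to $\mathbb{E}[\phi_{\rm RS}(Q)]\le\sup_m\phi_{\rm RS}(m)$. The obstacle you flag (reassembling the degree-$p$ overlap monomials after integration by parts into the single $-(p-1)m^p/(2p\Delta)$ coefficient) is precisely the only genuinely new bookkeeping relative to the $p=2$ case, and the paper handles it as you sketch, so your proof is correct and matches the paper's argument.
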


\begin{proof}
	We are going to compare the system with $N$ variables to the system with $N+1$ variables.
	Define $A_N = \mathbb{E}[\log  {\mathcal Z}_{N+1} ] - \mathbb{E}[\log  {\mathcal Z}_N ]$. Consequently, $F_N = \frac{1}{N} \sum_{k=0}^{N-1} A_k$ and $\limsup F_N \leq \limsup A_N$. 

	We are thus going to upper-bound $A_N$. 
	Let $X \in S^N$ be the $N$-first variables and $\sigma\in S$ the $(N+1)^{\text{th}}$ variable. We decompose
	$
	H_{N+1}(X,\sigma) = H_N'(X) + \sigma z(X) + \sigma^2 s(X)
	$
	where
		\begin{align*}
			H_{N}'(X)
			&= \sum_{i_1< \dots < i_p}
			\frac{\Delta^{-1}\sqrt{(p-1)!}}{(N+1)^{(p-1)/2}} Y_{i_1 \dots i_p}
			x_{i_1}\dots x_{i_p} - \frac{\Delta^{-1} (p-1)!}{2(N+1)^{p-1}}
			(x_{i_1} \dots x_{i_p})^2 \,,
		\end{align*}
	\vspace{-0.7cm}
		\begin{align*}
			z(X)&=\Delta^{-1}\sum_{i_1 < \dots < i_{p-1} \leq n}  \frac{\sqrt{(p-1)!}}{(N+1)^{(p-1)/2}} Y_{i_1 \dots i_{p-1},n+1} x_{i_1} \dots x_{i_{p-1}} \,, \\
			s(X)&= - \Delta^{-1}\sum_{i_1 < \dots < i_{p-1} \leq n} \frac{(p-1)!}{2(N+1)^{p-1}} (x_{i_1} \dots x_{i_{p-1}})^2  \,.
		\end{align*}
	One can also decompose $H_N(X) = H_N'(X) + y(X)$ in law, where
	\begin{align*}
		y(X)=\Delta^{-1} \sum_{i_1< \dots < i_p} &\sqrt{(p-1)!}\left(\frac{p-1}{N^{p}} + r_n\right)^{1/2}  V'_{i_1 \dots i_p} x_{i_1} \dots x_{i_p} 
		\\
		&+
		(p-1)!\left(\frac{p-1}{N^{p}} + r_n\right) \left(x^0_{i_1} \dots x^0_{i_p} x_{i_1} \dots x_{i_p} 
		-\frac{1}{2}(x_{i_1} \dots x_{i_p})^2 \right) \,.
	\end{align*}
	In the above definition, the $V'$ are i.i.d.\ $\mathcal{N}(0,\Delta)$ random variables, independent of everything else, and $r_n = o(N^{-p})$. If we denote by $\langle \cdot \rangle'$ the Gibbs measure on $S^N$ corresponding to the Hamiltonian $\log P_X + H_N'$ we can rewrite 
	\begin{equation} \label{eq:cavity}
		A_N = \mathbb{E} \log \Big\langle \sum_{\sigma \in S} P_X(\sigma) e^{\sigma z(X) + \sigma^2 s(X)} \Big\rangle' - \mathbb{E} \log \big\langle e^{y(X)} \big\rangle' \,,
	\end{equation}
	where $X$ is a sample from $\langle \cdot \rangle'$, independently of everything else.
	$A_N$ is thus a difference of two terms that will correspond exactly to the terms of \eqref{phi_rs_1}.
	As in sec.~\ref{sec:perturbation}, we can show that under a small perturbation of the system, the overlap $X \cdot X^0$
	with the planted configuration concentrates around a non-negative value $Q'$.
	This leads to simplifications in \eqref{eq:cavity}:
	\begin{equation} \label{eq:simplification}
		\limsup_{N \to \infty} A_N \leq \limsup_{N \to \infty} \mathbb{E}[\phi_{\rm RS}(Q')]
		\leq F_{\rm RS}\,.
	\end{equation}
	For a precise derivation of \eqref{eq:simplification}, the reader is invited to report to the matrix case (see \cite{lelarge2016fundamental}, sec. 4.6), since there is no major difference with the tensor case on this point. The arguments presented there are commonly used in the study of spin glasses and are the analog of cavity computations in the SK model developed in \cite{talagrand2010mean}, sec. 1.5.
	This concludes the proof.
\end{proof}

\section{Examples of phase transitions}
\label{sec_examples}
\begin{figure*}[!h]
	\begin{center}
		\hspace*{-1.0cm}
		\includegraphics[scale=0.76]{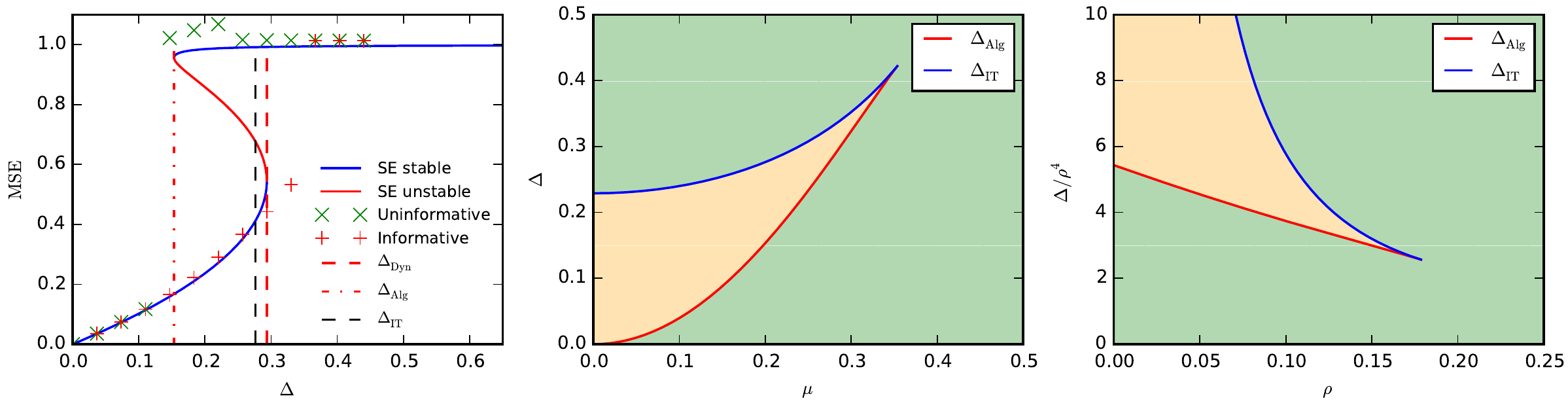}
		\caption{{\bf Left panel:} Comparison between the AMP fixed point reached from
			uninformative (marked with crosses) or informative (i.e.\ strongly correlated with the
			ground truth, marked with pluses) initialization and the fixed point of the SE equations
			(stable fixed point in blue, unstable in red). The data are for the
			Gaussian prior with mean $\mu = 0.2$, unit variance, $p=3$, $r=1$. The
			AMP runs are done on a system of size $N = 1000$.
			{\bf Central panel:} Phase diagram for the order $p=3$ tensor 
			factorization, rank $r=1$, Gaussian prior of mean $\mu$ (x-axes) and
			unit variance. In the green-shaded zone
			AMP matches the information-theoretically optimal performance, ${\rm
				MMSE}={\rm MSE}_{\rm AMP}$. In the orange-shaded zone ${\rm
			MMSE}<{\rm MSE}_{\rm AMP}$. The tri-critical point is located at $\mu_{\rm Tri} = {(p-2)}/{(2\sqrt{p-1})}$ and $\Delta_{\rm Tri} = {x_{\rm Tri}^{p-2}}/{\left(1 + x_{\rm Tri} \right)^{p-1}}$ where $x_{\rm Tri} = {(p-2)(3p - 4)}/{p^2}$.
			{\bf Right panel:} Phase diagram for the order $p=3$ tensor 
			factorization, rank $r=1$, the Bernoulli prior as a
			function of $\rho$ and $\Delta/\rho^4$. The tri-critical point is
			located at $\rho_{\rm Tri} = 0.178$ and $\Delta_{\rm Tri}/\rho^4 =
			2.60$. As $\rho\to 0$ we observed $\Delta_{\rm Alg}/\rho^4 \to
			2e$. 
			Compare to Fig. 5 in \cite{Lesieurzdeborova2017} where the same phase
			diagram is presented for the matrix factorization $p=2$ case. 
			\label{All_fig}  
		}
	\end{center}
\end{figure*}



We used the state evolution eqs.~(\ref{SE_M1}-\ref{SE_M2}), and the free
energy \eqref{BetheFreeEnergy}, to compute the values of the
thresholds $\Delta_c$, $\Delta_{\rm IT}$ and $\Delta_{\rm Alg}$ for
several examples of the prior distributions: Gaussian
(spherical spins), $P_X(x) = {\cal N}\left( \mu ,1\right)$;
Rademacher (Ising spins), $P_X(x) = \frac{1}{2}\left[\delta(x-1) +
\delta(x+1)\right]$;  Bernoulli (sub-tensor localization), $P_X(x) =
\rho \delta(x-1) + (1-\rho)\delta(x)$; and clustering (tensor stochastic block model), $P_X(x) = \frac{1}{r} \sum\limits_{k=1}^r
\delta(x - \vec{e}_k)$, where $\vec{e}_k \in \mathbb{R}^r$ is a vector with a 1 at coordinate $k$ and 0 elsewhere. Examples of values of the thresholds for the above priors are given in
Table \ref{Table_Result}. For the zero mean Gaussian and the
Rademacher prior our results for $\Delta_{\rm IT}$ indeed agree with
those presented in \cite{korada2009exact,perry2016statistical}. Central and right part
of Fig.~\ref{All_fig}
present the
thresholds for the Gaussian and Bernoulli prior as a function of the
mean $\mu$ and density $\rho$, respectively. Left part of Fig.
\ref{All_fig} illustrates that indeed the fixed points of the
state evolution agree with the fixed points of the AMP algorithm.



\begin{table}[h]
\begin{center}
  \centering
  \renewcommand{\arraystretch}{1.2}
  \begin{tabular}{|c|c|c|c|c|c|c|c|c|}
    \hline
    \backslashbox{$p$}{{Prior}} & \multicolumn{2}{c|}{{Gaussian $\cN(0,1)$}} & \multicolumn{2}{c|}{{Rademacher}} & \multicolumn{2}{c|}{{Bernoulli $\rho = 0.1$}} & \multicolumn{2}{c|}{{3 clusters}} \\
     \hline
 &
$\Delta_{\rm IT} p \log(p)$& $\Delta_{\rm Alg}$ &
$\Delta_{\rm IT}$& $\Delta_{\rm Alg}$ &
$\Delta_{\rm IT} \rho^{-p}$& $\Delta_{\rm Alg} \rho^{-2p + 2}$ &
$\frac{\Delta_{\rm IT}}{\Delta_{\rm Alg}}$&
$\frac{\Delta_{\rm Alg}r^{2p -2}}{p-1}$
\\
\hline
$  2$ &
$2 \log{2}$& $1$ &
$1$& $1$ &
$-$& $-$ &
$1$& $1$
\\
\hline
$ 3$ &
$0.754$& $0$ &
$0.2828$& $0$ &
$0.577 $& $3.738$ &
$1$& $1$
\\
\hline
$ 4$ &
$0.701$& $0$ &
$0.1902$& $0$ &
$0.398 $& $6.017$ &
$1.18$& $1$
\\
\hline
$ 5$ &
$0.685	$& $0$ &
$0.1473$& $ 0 $ &
$0.311 $& $8.251$ &
$1.62$& $1$
\\
\hline
$10$ &
$0.677$& $0$ &
$0.07216$& $0$ &
$0.154 $& $19.30$ &
$6.59$& $1$
\\
\hline
  \end{tabular}
\end{center}
\caption{Examples of the information-theoretic $\Delta_{\rm IT}$ and
  algorithmic $\Delta_{\rm Alg}$
  thresholds for order-$p$ tensor decomposition for different priors
  on the factors. For the Gaussian case $\Delta_{\rm IT}p \log(p)$
  converges to 1 at large $p$.
 For the Bernoulli case the rescaling in power of $\rho$ is for
 convenience to present quantities of order one, we did not check if it
 describes the large $p$ limit. }
\label{Table_Result}
\end{table}

\subsection{Results for Gaussian prior}
In this section we detail the analysis of the state evolution for rank $r=1$ Gaussian prior of mean $\mu$ and variance 1.
\begin{equation}
P^{\rm Gauss}_X = {\cal N}\left( \mu ,1\right) \,.
\end{equation}
Using \eqref{SE_M1} one gets for the SE equation
\begin{equation}
M^{t+1} = \frac{ \Delta\mu^2 + ({M^t})^{p-1}(1 + \mu^2)}{ \Delta + ({M^t})^{p-1}}\,,
\end{equation}
where $M$ is a scalar, and $\Delta$ is the inverse Fisher information of the output channel. 
It turns out that as soon as $p \geq 3$ the SE equation exhibits multiple stable fixed points. 

For the zero mean $\mu = 0$ case one gets
\begin{equation}
M^{t+1} = \frac{{(M^t)}^{p-1}}{\Delta + {(M^t)}^{p-1}}\,.
\end{equation}
Here the fixed point $M=0$ is stable whatever the noise $\Delta>0$ and therefore AMP will not achieve performance better than random guessing for any $\Delta>0$. Ref. \cite{richard2014statistical} studies the scaling of $\Delta$ with $N$ for which AMP and other algorithms succeed.

For positive mean $\mu >0$, however, the AMP algorithm is able to
recover the signal for values of $\Delta<\Delta_{\rm Alg}$ with 
\begin{eqnarray}
\Delta_{\rm Alg}(\mu) = \frac{x_{\rm Alg}^{p-2}}{\left(1 + x_{\rm Alg} \right)^{p-1}} , \quad \Delta_{\rm Dyn}(\mu) = \frac{x_{\rm Dyn}^{p-2}}{\left(1 + x_{\rm Dyn} \right)^{p-1}} ,
\label{Gauss_Dyn_Alg_1}
\\
x_{\rm Alg}(\mu) = \frac{p-2 + 2\mu^2 - \sqrt{(p-2)^2 - 4\mu^2(p-1)}}{2(1 + \mu^2)},
\label{Gauss_Dyn_Alg_3}
\\
x_{\rm Dyn}(\mu) = \frac{p-2 + 2\mu^2+ \sqrt{(p-2)^2 - 4\mu^2(p-1)}}{2(1 + \mu^2)},
\label{Gauss_Dyn_Alg_2}
\end{eqnarray}
where we defined a new threshold $\Delta_{\rm Dyn}$ as the smallest
such that for $\Delta > \Delta_{\rm Dyn}$ the state evolution has a
unique fixed point. 
We know of no analytical formula for $\Delta_{\rm IT}$ and for Figure \ref{All_fig} we  computed it numerically. The tri-critical point where all these curve meet is located at
\begin{eqnarray}
\mu_{\rm Tri} = \frac{p-2}{2 \sqrt{p-1}} \,.
\end{eqnarray}
Using the above expressions we derive that
\begin{eqnarray}
	\Delta_{\rm Dyn}(\mu = 0) &=& \frac{1}{p-2} \left(\frac{p-2}{p-1}\right)^{p-1} \underset{p \to \infty}{\sim} \frac{1}{e p}\, , 
\\
\Delta_{\rm Alg}(\mu) &\underset{\mu \rightarrow 0}{\sim}& \left( \frac{\mu^2}{p-2} \right)^{p-2}\, .
\end{eqnarray}
We can also compute the limit of the $\Delta_{\rm IT}(\mu=0,p)$ as $p \rightarrow \infty$ and get
\begin{equation}
	\Delta_{\rm IT}(\mu = 0,p) \underset{p \rightarrow \infty}{\sim} \frac{1}{p
  \log(p)}\, .
\end{equation}
This scaling agrees with the large $p$ behavior derived in \cite{richard2014statistical} and \cite{perry2016statistical}. 

\subsection{Results for clustering prior}
An interesting example of the prior for rank $r$ tensor estimation is 
\begin{equation}
P^{\rm Clusters}_X(x) = \frac{1}{r} \sum\limits_{1 \leq k \leq r}
\delta(x - \vec{e}_k) \, .
\label{clusters}
\end{equation}
This describes a model of $r$ non-overlapping clusters. Due to the
channel universality, this prior also describes the stochastic block
model on dense hyper-graphs as considered for sparse hyper-graph in
e.g. \cite{angelini2015spectral}. This model was considered in detail for $p=2$ in \cite{Lesieurzdeborova2017}.

The above clustering prior has non-zero mean, and it also exhibits the
transition $\Delta_c$ from a phase where recovery of clusters better
than chance is not possible, to a phase where it is.


To analyze the SE equations we first notice that the stable fixed point will be of the form
\begin{equation}
M = \frac{b I_r}{r} + \frac{(1-b) J_r}{r^2} \in \mathbb{R}^{r \times
  r} , b \in [0;1]\, ,
\end{equation}
where $I_r$ is the identity matrix and $J_r$ is a matrix filled with ones. $b = 0$ means that the estimate of the marginals does not carry any information. $b=1$ means perfect reconstruction.
The state evolution now becomes
\begin{equation}
b^{t+1} = {\cal M}_r\left( r\frac{ \left( \frac{b^{t}}{r} +
      \frac{1-b^t}{r^2}  \right)^{p-1} - \left( \frac{1-b^t}{r^2}
    \right)^{p-1}   }{\Delta}\right) \, ,\label{SE_Group}
\end{equation}
where ${\cal M}_r$ is a function that was defined and studied in
\cite{lesieur2015mmse}. Its Taylor expansion is 
\begin{eqnarray}
{\cal M}_r(x) = \frac{x}{r^2} + x^2\frac{r-4}{2 r^4} + O\left( x^3
  \right) \label{Expansion_M_r}\, .
\end{eqnarray}
We further notice that $b=0$ is always a fixed point of \eqref{SE_Group}.
By expanding \eqref{SE_Group} to first order one gets
\begin{eqnarray}
b^{t+1} = b^t \frac{p-1}{\Delta r^{2p -2}} + O\left( {b^t}^2\right) \, .
\end{eqnarray}
This fixed point therefore becomes unstable when
\begin{eqnarray}
\Delta < \Delta_c \equiv \frac{p-1}{r^{2p - 2}} \, .  \label{Deltac_clust}
\end{eqnarray}
By analyzing eq.~\eqref{SE_Group} further we can prove that $\forall x \in \mathbb{R}^+$
\begin{eqnarray}
m(x) &=& {\cal M}_r(x)
\end{eqnarray}
\begin{equation}
\Delta(x) = r\frac{ \left( \frac{{\cal M}_r(x)}{r} + \frac{1-{\cal M}_r(x)}{r^2}  \right)^{p-1} - \left( \frac{1-{\cal M}_r(x)}{r^2}  \right)^{p-1}   }{x}.
\end{equation}
$m$ is a fixed point of \eqref{SE_Group} when $m=m(x)$ and $\Delta = \Delta(x)$.
Rather than finding the fixed point iteratively, the above equations
allow us to draw all the fixed point of \eqref{SE_Group}, be it stable or unstable
We have that $m(x)$ is a stable fixed point of \eqref{SE_Group} if and only if
\begin{eqnarray}
\frac{\partial \Delta(x)}{\partial x} < 0 \,.
\end{eqnarray}
The next question is whether there is a first or second order phase
transition at $\Delta_c$. To answer this, one needs to analyze whether
the fixed point close to $b = 0$ is stable or unstable. For this we 
to compute $\frac{\partial \Delta(x)}{\partial x}$ at $x=0$ to get
using \eqref{Expansion_M_r} that 
\begin{eqnarray}
\frac{\partial \Delta(x)}{\partial x} = \frac{p-1}{2 r^{2p}} \left(-2p - r 
+ pr \right) \,.
\end{eqnarray}
Therefore if $-2p - r 
+ pr > 0$ there will be no stable fixed point close to $b=0$ and the
system must have a first order phase transition (discontinuity in the
${\rm MSE}_{\rm AMP}$) at $\Delta_c = \Delta_{\rm Alg}$. 

For two clusters $r=2$, there is a second order phase transition at
$\Delta_c$ for all $p\ge 2$. However, analyzing the state evolution
numerically we observed that for $p \ge 5$ there is a discontinuity
later at some $\Delta_{\rm Alg}< \Delta_c$. For three and more
clusters $r\ge 3$ we always have $\Delta_{\rm Alg} = \Delta_c$, and
for $-2p - r + pr \le 0$ we have not detected any other
discontinuities. Values of $\Delta_{\rm IT}$ for three clusters and
some values of $p$ are given in Table \ref{Table_Result}.

\section*{Acknowledgment}
This work has been supported by the ERC under the European Union's FP7
Grant Agreement 307087-SPARCS.

{%
	\singlespacing
	\bibliographystyle{plain}
	\bibliography{../ISITSubmited/refs}
}

\end{document}